\numberwithin{equation}{section}
\def\bb#1\eb{\textcolor{blue}
{#1}} %
\def\br#1\er{\textcolor{red}
{#1}} %
\def\bv#1\ev{\textcolor{green}
{#1}} %
\def\bc#1\ec{\textcolor{cyan}
{#1}} %
\def\Xint#1{\mathchoice
  {\XXint\displaystyle\textstyle{#1}}%
  {\XXint\textstyle\scriptstyle{#1}}%
  {\XXint\scriptstyle\scriptscriptstyle{#1}}%
  {\XXint\scriptscriptstyle\scriptscriptstyle{#1}}%
  \!\int}
\def\XXint#1#2#3{{\setbox0=\hbox{$#1{#2#3}{\int}$}
  \vcenter{\hbox{$#2#3$}}\kern-.5\wd0}}
\def\-int{\Xint -}
\newcommand{\R}{\mathbb{R}}
\newcommand{\N}{\mathbb{N}}
\newcommand{\h}{\textit{H}^{s}(\R^{N})}
\newcommand{\X}{\textit{X}^{s}(\R^{N+1}_{+})}
\newcommand{\Y}{\textit{X}^{s}_{r}(\R^{N+1}_{+})}
\DeclareMathOperator{\dive}{div}
\DeclareMathOperator{\supp}{supp}
\DeclareMathOperator{\I}{\mathcal{I}}
\DeclareMathOperator{\e}{\varepsilon}
\newtheorem{lem}{Lemma}[section]
\newtheorem{thm}{Theorem}[section]
\newtheorem{defn}{Definition}[section]
\newtheorem{ex}{Example}[section]
\newtheorem{remark}{Remark}[section]
\begin{document}
\title[multiple solutions]{Multiple solutions for a class of nonhomogeneous fractional Schr\"odinger equations in $\R^{N}$} 

\author[V. Ambrosio]{Vincenzo Ambrosio}
\address{Dipartimento di Scienze Pure e Applicate (DiSPeA),
Universit\`a degli Studi di Urbino 'Carlo Bo'
Piazza della Repubblica, 13
61029 Urbino (Pesaro e Urbino, Italy)}
\email{vincenzo.ambrosio@uniurb.it}
\author[H. Hajaiej]{Hichem Hajaiej}

\address{Department of Mathematics
California State University, Los Angeles                            
5151 State University Drive Los Angeles, CA 90032}
\email{hichem.hajaiej@gmail.com}

\keywords{Fractional Laplacian; mountain pass theorem; extension method; positive solutions}
\subjclass[2010]{35A15, 35J60, 35R11, 45G05}

\begin{abstract}
This paper is concerned with the following fractional Schr\"odinger equation
\begin{equation*}
\left\{
\begin{array}{ll}
(-\Delta)^{s} u+u= k(x)f(u)+h(x) \mbox{ in } \R^{N}\\
u\in H^{s}(\R^{N}), \, u>0 \mbox{ in } \R^{N},
\end{array}
\right.
\end{equation*}
where $s\in (0,1)$, $N> 2s$, $(-\Delta)^{s}$ is the fractional Laplacian,  $k$ is a bounded positive function, $h\in L^{2}(\R^{N})$, $h\not \equiv 0$ is nonnegative and  $f$ is either asymptotically linear or superlinear at infinity.\\
By using the $s$-harmonic extension technique and suitable variational methods, we prove the existence of at least two positive solutions for the problem under consideration, provided that $|h|_{2}$ is sufficiently small.  
\end{abstract}

\maketitle
\section{Introduction}

\noindent
In this paper we deal with the existence of positive solutions for the following nonlinear fractional equation 
\begin{equation}\label{P}
\left\{
\begin{array}{ll}
(-\Delta)^{s} u+u= k(x)f(u)+h(x) \mbox{ in } \R^{N} \\
u\in H^{s}(\R^{N}), \, u>0 \mbox{ in } \R^{N},
\end{array}
\right.
\end{equation}
with $s\in (0,1)$, $N> 2s$, $k$ is a bounded positive function, $h\in L^{2}(\R^{N})$, $h\geq 0$, $h\not\equiv 0$, and the nonlinearity $f: \R\rightarrow \R$
is a smooth function which can be either asymptotically linear or superlinear at infinity.
Here, $(-\Delta)^{s}$ is the fractional Laplacian which can be defined for $u: \R^{N}\rightarrow \R$ belonging to the Schwartz space $\mathcal{S}(\R^{N})$ of rapidly decaying $C^{\infty}$-functions in $\R^{N}$, by setting 
$$
(-\Delta)^{s}u(x)=-\frac{C(N,s)}{2} \int_{\R^{N}} \frac{u(x+y)+u(x-y)-2u(x)}{|y|^{N+2s}} dy  \quad (x\in \R^{N})
$$
where
$$
C(N, s)=\left(\int_{\R^{N}} \frac{1-\cos (x_{1})}{|x|^{N+2s}} \, dx \right)^{-1};
$$
see for instance \cite{DPV}.\\
A basic motivation to study (\ref{P}), comes from the nonlinear fractional Schr\"odinger equation 
\begin{equation}\label{FSE}
\imath \frac{\partial \psi}{\partial t}=(-\Delta)^{s} \psi+V(x)\psi -g(x, |\psi|) \quad (t, x)\in \R\times \R^{N},
\end{equation}
when we are interested in standing wave solutions, namely solutions of the form $\psi(t, x)=u(x)e^{-\imath ct}$. 
Indeed, it is easily observed that a function $\psi(t, x)$ of this form satisfies (\ref{FSE}) if and only if $u$ is a solution of 
\begin{equation}\label{AFSE}
(-\Delta)^{s}u+V(x)u=g(x, u) \mbox{ in } \R^{N}
\end{equation}
with $V(x)=1+c$ and $g(x, u)=k(x)f(u)+h(x)$.\\
The equation (\ref{FSE}) has been proposed by Laskin in \cite{Laskin1, Laskin2} and it is fundamental in quantum mechanics, because it appears in problems involving nonlinear optics, plasma physics and condensed matter physics. \\
When $s=1$, the equation (\ref{AFSE}) becomes the classical Schr\"odinger equation
\begin{equation*}
-\Delta u+V(x)u=g(x, u) \mbox{ in } \R^{N},
\end{equation*}
which has been widely investigated by many authors, particularly on the existence of ground state solutions, positive solutions, sign-changing solutions and multiplicity of standing wave solutions. 
Since we cannot review the huge bibliography here, we just mention the works \cite{BLW, BW, BL1, FW, Rab}.
\noindent

Recently, a considerable attention has been focused on the study of problems involving fractional and non-local operators. This interest is motivated by both the theoretical research and the large number of applications such as phase transitions, flames propagation, chemical reaction in liquids, population dynamics, American options in finance, crystal dislocation, obstacle problems, soft thin films, semipermeable membranes, conservation laws, quasi-geostrophic flows, and so on. \\
Since the current literature is really too wide to attempt any reasonable account here, we derive the interested reader to \cite{DPV, MBRS}, where an elementary introduction to this topic is given. 
 
Concerning the fractional Schr\"odinger equation (\ref{AFSE}), in the last decade, many several existence and multiplicity results have been obtained by using different variational methods.\\
Felmer et al. \cite{FQT} dealt with the existence and the symmetry of positive solutions for (\ref{AFSE}) with $V(x)=1$ and involving a superlinear nonlinearity $g(x, u)$ satisfying the Ambrosetti-Rabinowitz condition.
Secchi \cite{Secchi1} obtained via minimization on Nehari manifold, the existence of ground state solutions to (\ref{AFSE}) when the nonlinearity is superlinear and subcritical, and the potential $V(x)$ satisfies suitable assumptions as $|x|\rightarrow \infty$.
Chang and Wang \cite{CW} (see also \cite{A2}) investigated the existence of nontrivial solutions  to (\ref{AFSE}) with $V(x)=1$, $g$ is autonomous  and verifies Berestycki-Lions type assumptions.
Frank et al. \cite{FLS} showed uniqueness and nondegeneracy of ground state solutions for (\ref{AFSE}) with $V(x)=1$ and $g(x, u)=|u|^{q}u$ for all admissible exponents $q$.
Cho et al. \cite{H3} obtained the existence of standing waves using the method of concentration-compactness by studying the associated constrained minimization problem, and showed the orbital stability of standing waves which are the minimizers of the associate variational problem. 
Pucci et al. \cite{P} established via Mountain Pass Theorem and Ekeland variational principle, the existence of multiple solutions for a Kirchhoff fractional Schr\"odinger equation involving a nonlinearity satisfying the Ambrosetti-Rabinowitz condition, a positive potential $V(x)$ verifying suitable assumptions, and in presence of a perturbation term.
Figueiredo and Siciliano \cite{FS} established a multiplicity result for a fractional Schr\"odinger equation via Ljusternick- Schnirelmann and Morse theory. We also mention the papers \cite{A1, A3, A4, Chang1, H3, FMV, H1, H2, H4, MBR, Secchi3, SZ} where further results related to (\ref{AFSE}) are given.
\smallskip

\noindent
After an accurate bibliographic review, we have realised that there are only few papers concerning the existence and the multiplicity of solutions for nonhomogeneous problems in non-local setting \cite{Colorado, Servadei, Torres}, and this is 
surprising, because, in the classic framework, such type of problems have been extensively investigated by many authors \cite{BB, CZ, Jtwo, Struwe, Zhu, ZZ}.

Strongly motivated by this fact, the purpose of this work is to investigate the existence and the multiplicity of positive solutions for the nonhomogeneous equation (\ref{P}), under the effect of a small perturbation $h\in L^{2}(\R^{N})$, and requiring suitable assumptions on the nonlinearity $f$.\\
More precisely, along the paper, we assume that $f$ satisfies the following conditions:
\begin{compactenum}[($f1$)]
\item $f\in C^{1}(\R, \R^{+})$, $f(0)=0$ and $f(t)=0$ for $t\leq 0$;
\item $\displaystyle{\lim_{t\rightarrow 0^{+}} \frac{f(t)}{t}=0}$;
\item there exists $p\in (1, \frac{N+2s}{N-2s})$ such that $\displaystyle{\lim_{t\rightarrow +\infty} \frac{f(t)}{t^{p}}=0}$;
\item there exists $l\in (0, \infty]$ such that $\displaystyle{\lim_{t\rightarrow +\infty} \frac{f(t)}{t}=l}$.
\end{compactenum}
\noindent
Let us note that $(f1)$-$(f3)$, yield that for any $\varepsilon>0$ there exists $C_{\varepsilon}>0$ such that 
\begin{equation}\label{2.7}
|F(t)|\leq \frac{\varepsilon}{2}t^{2}+\frac{1}{p+1}C_{\varepsilon}|t|^{p+1} \mbox{ for all } t\in \R,
\end{equation}
while $(f4)$ implies that $f$ is asymptotically linear if $l<\infty$, or superlinear  when $l=\infty$. \\
Then, under the above assumptions, we are able to prove the existence of weak solutions to (\ref{P}).\\
We recall that the precise meaning of solution to (\ref{P}) is the following: 
\begin{defn}
We say that  $u\in H^{s}(\R^{N})$ is a weak solution to $(\ref{P})$ if verifies the following condition
$$
\iint_{\R^{2N}} \frac{(u(x)-u(y))}{|x-y|^{N+2s}} (\varphi(x)-\varphi(y))\, dx dy+\int_{\R^{N}} u \varphi \, dx=\int_{\R^{N}} [k(x) f(u)+h(x)] \varphi \, dx
$$
for any $\varphi\in \h$.
\end{defn}
Here
$$
H^{s}(\R^{N})= \left\{u\in L^{2}(\R^{N}) : \frac{|u(x)-u(y)|}{|x-y|^{\frac{N+2s}{2}}} \in L^{2}(\R^{2N}) \right \}.
$$
is the fractional Sobolev space \cite{DPV, MBRS}; see Section $2$ below.

Due to the presence of the fractional Laplacian, which is a nonlocal operator, we prefer to analyze (\ref{P}) by using the $s$-harmonic extension method \cite{CS}.
This procedure is commonly used to study fractional problems, since it allows us to write a given nonlocal equation in a local way and to adapt known  techniques of the Calculus of Variations to these kind of problems.  \\
Hence, instead of (\ref{P}), we can consider the following degenerate elliptic equation with a nonlinear Neumann boundary condition 
\begin{equation}\label{R0}
\left\{
\begin{array}{ll}
\dive(y^{1-2s} \nabla U)=0 & \mbox{ in } \R^{N+1}_{+}=\{(x, y)\in \R^{N}\times \R: y>0 \} \\
\frac{\partial U}{\partial \nu^{1-2s}}=\kappa_{s}[-U+k(x) f(U)+h(x)]  & \mbox{ on } \partial\R^{N+1}_{+}=\R^{N}\times \{0\} \\
\end{array},
\right.
\end{equation}
where $\kappa_{s}$ is a suitable constant; see \cite{CS}.
Taking into account  this fact, we are able to resemble some variational techniques developed in the papers \cite{JT, LWZ, StZ, WZ}, dealing with asymptotically or superlinear classical problems, by introducing the following functional 
$$
I(U)=\frac{1}{2}\Bigl[\kappa^{-1}_{s}\iint_{\R^{N+1}_{+}} y^{1-2s} |\nabla U|^{2} \, dx dy+\int_{\R^{N}} U(x, 0)^{2} \, dx \Bigr]-\int_{\R^{N}} k(x) F(U(x, 0)) \, dx- \int_{\R^{N}} h(x) U(x, 0) \, dx
$$
defined on the weight Sobolev space $\X$ consisting of functions $U:\R^{N+1}_{+}\rightarrow \R$ such that 
$$
\iint_{\R^{N+1}_{+}} y^{1-2s} |\nabla U|^{2} \, dx dy+\int_{\R^{N}} U(x, 0)^{2} \, dx<\infty.
$$
Clearly, this functional simplification creates some additional difficulties to overcome such as, for instance, some weighted embedding are needed (see Theorem \ref{DMV}) to obtain some convergence results (compare with Lemma \ref{lem2.1}).
Moreover, the arguments used in \cite{JT, LWZ} to prove the non-existence of solutions for certain eigenvalues problems, have to be handled carefully in order to take care the trace of the involved functions (see Lemma \ref{lem2.5}).

\noindent
Now, we state our first main result concerning the existence of positive  solutions to (\ref{P}) in the asymptotically linear case, that is $l<\infty$.
\begin{thm}\label{thm1}
Let $s\in (0,1)$ and $N> 2s$. Assume that $h\in L^{2}(\R^{N})$, $h(x)\geq 0$, $h(x)\not\equiv 0$ and $k\in L^{\infty}(\R^{N}, \R_{+})$ verifies the following condition:
\begin{compactenum}[($K$)]
\item there exists $R_{0}>0$ such that 
\begin{equation}\label{1.3}
\sup \left\{\frac{f(t)}{t}: t>0\right\}<\inf\left\{\frac{1}{k(x)}: |x|\geq R_{0}\right\}.
\end{equation}
\end{compactenum}
Let us suppose that $f$ verifies $(f1)$-$(f4)$ and $\mu^{*}\in (l, \infty)$ where
\begin{equation}\label{1.4}
\mu^{*}=\inf\left\{\int_{\R^{N}} (|(-\Delta)^{\frac{s}{2}}u|^{2}+u^{2}) \,dx: u\in H^{s}(\R^{N}), \int_{\R^{N}} k(x) u^{2} \, dx=1\right\}.
\end{equation}
Let us assume that 
\begin{equation}\label{HV}
|h|_{2}<m:=\max_{t\geq 0}\left[\left(\frac{1}{2}-\frac{\varepsilon}{2}|k|_{L^{\infty}(\R^{N})}\right)t-\frac{C_{\varepsilon}}{p+1}S_{*}^{p+1}t^{p}|k|_{L^{\infty}(\R^{N})}\right],
\end{equation}
where $\varepsilon\in (0, |k|^{-1}_{L^{\infty}(\R^{N})})$ is fixed and $S_{*}$ is the best Sobolev constant of the embedding $H^{s}(\R^{N})\subset L^{2^{*}_{s}}(\R^{N})$.
Then, the problem $(\ref{P})$ possesses at least two positive solutions $u_{1}, u_{2}\in H^{s}(\R^{N})$ with the property that $E(u_{1})<0<E(u_{2})$.
Here $E: H^{s}(\R^{N})\rightarrow \R$ is the energy functional associated to $(\ref{P})$, that is
$$
E(u)=\frac{1}{2} \int_{\R^{N}} (|(-\Delta)^{\frac{s}{2}}u|^{2}+u^{2}) \,dx-\int_{\R^{N}} k(x) F(u) \, dx- \int_{\R^{N}} h(x) u \, dx.
$$
\end{thm}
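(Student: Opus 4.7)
\emph{Plan.} Via the $s$-harmonic extension, reformulate (\ref{P}) as the Neumann problem (\ref{R0}) with energy $I$ on $\X$, and produce two critical points $U_1, U_2$ of $I$ whose traces $u_i := U_i(\cdot, 0) \in H^s(\R^N)$ are the two positive solutions, with $E(u_1) < 0 < E(u_2)$. Combining (\ref{2.7}), the boundedness of $k$ and the trace/weighted Sobolev embeddings (Theorem \ref{DMV}), one gets
\[
I(U) \geq \|U\|_{\X}\bigl[\psi(\|U\|_{\X}) - |h|_2\bigr],\qquad \psi(t) := \Bigl(\tfrac12 - \tfrac{\e}{2}|k|_{L^\infty(\R^N)}\Bigr)t - \tfrac{C_\e|k|_{L^\infty(\R^N)}}{p+1}\,S_*^{p+1}\,t^p.
\]
Since $\max_{t\geq 0}\psi(t) = m$ and (HV) gives $|h|_2 < m$, one picks $\rho, \alpha > 0$ with $\inf_{\|U\|_{\X}=\rho} I \geq \alpha > 0$.

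\emph{First solution $u_1$ via Ekeland.} Because $h\geq 0$ and $h\not\equiv 0$, choose $\Phi \in \X$ with nonnegative trace $\varphi$ and $\int_{\R^N} h\varphi\, dx > 0$; conditions $(f1)$--$(f2)$ then yield $I(t\Phi) = -t\int h\varphi + O(t^2) < 0$ for small $t > 0$, so $c_1 := \inf_{\overline{B}_\rho} I < 0 < \alpha$. Ekeland's variational principle on $\overline{B}_\rho$ produces a bounded (PS) sequence at level $c_1$ that remains in the interior; Lemma \ref{lem2.1} (compactness of the nonlinear term granted by (K)) together with Theorem \ref{DMV} passes to the limit, yielding a critical point $U_1 \in B_\rho$ with $I(U_1) = c_1 < 0$. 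Its trace $u_1$ solves (\ref{P}); testing with $u_1^-$ and using $(f1)$, $h\geq 0$, plus the strong maximum principle, one obtains $u_1 > 0$ and $E(u_1) = c_1 < 0$.

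\emph{Second solution $u_2$ via Mountain Pass.} One exhibits $V \in \X$ with $\|V\|_{\X} > \rho$ and $I(V) \leq 0$. A natural candidate is $V = t^*\Phi^*$, where $\Phi^*$ is the $s$-harmonic extension of the first weighted eigenfunction $\varphi^* > 0$ of $(-\Delta)^s\varphi^* + \varphi^* = \mu^* k(x)\varphi^*$ normalized by $\int k(\varphi^*)^2\,dx = 1$, so that $\|\Phi^*\|_{\X}^2 = \mu^*$; since $F \geq 0$, the upper bound $I(t\Phi^*) \leq \tfrac{\mu^* t^2}{2} - t\int h\varphi^*$ is $\leq 0$ on a range of $t$, and a calibration of $\rho$ (permitted by the interval of admissible radii in (HV)) against $\int h\varphi^*$ produces such a $t^*$. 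Set $\Gamma := \{\gamma \in C([0,1], \X) : \gamma(0)=0,\ \gamma(1)=V\}$; every $\gamma$ crosses $\partial B_\rho$, hence the mountain pass level $c_2 := \inf_{\gamma\in\Gamma}\max_{t\in[0,1]} I(\gamma(t))$ satisfies $c_2 \geq \alpha > 0$. A Cerami sequence $\{W_n\}$ at level $c_2$ is bounded by combining $2I(W_n) - \langle I'(W_n), W_n\rangle$ with $(f4)$ and the spectral gap $\mu^* > l$, with Lemma \ref{lem2.5} ruling out nontrivial limits of the weighted eigenvalue problem. Condition (K) and Theorem \ref{DMV} then upgrade the weak limit $U_2$ to a strong limit; $I(U_2) = c_2 > 0$, $I'(U_2) = 0$, and $u_2 := U_2(\cdot, 0) > 0$ is the second solution with $E(u_2) = c_2 > 0$.

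\emph{Main obstacle.} The crux is the mountain-pass geometry in the asymptotically linear regime $\mu^* > l$: coercivity of the quadratic form at infinity forbids any ray from sending $I$ to $-\infty$, so the endpoint $V$ must live in a bounded intermediate annulus. Its construction hinges on a delicate quantitative balance between $\rho$ (hence $|h|_2$), the first eigenvalue $\mu^*$, and the mass $\int h\varphi^*$. The verification of the Cerami condition is the secondary difficulty, since the asymptotic linearity of $f$ precludes any Ambrosetti--Rabinowitz-type hypothesis; one must exploit the strict spectral gap $\mu^* > l$ together with Lemma \ref{lem2.5} to prevent escape of mass through the limiting weighted eigenvalue problem at infinity.
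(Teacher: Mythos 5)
Your first-solution step (positivity of $I$ on $\partial B_\rho$ from (\ref{2.7}) and (\ref{HV}), negativity inside, Ekeland plus Lemma~\ref{lem2.1}) does match the paper's Theorem~\ref{thm2.1}. The mountain-pass half, however, has a genuine gap, and it stems from a misreading of the hypothesis.

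\textbf{The spectral condition is the wrong way round in your argument.} Although the theorem is written as ``$\mu^*\in(l,\infty)$,'' the proof in the paper (Lemma~\ref{lem2.3}: ``Being $l>\mu^*$, we can find\ldots'') uses $l>\mu^*$, and that is the condition that is actually consistent with a mountain-pass geometry. Indeed, if $\mu^*>l$ then for every $u$ with $\int k u^2=1$ one has $\|u\|^2\ge\mu^*>l$, and together with $(f4)$ this forces $I(tu)\to+\infty$ along every ray; there is then no admissible endpoint $V$ with $I(V)\le 0$ and $\|V\|>\rho$, and the whole construction collapses. You have inherited the typo and built your ``coercivity at infinity / intermediate annulus'' picture around it, which is not what the paper does and does not produce the geometry you need.

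\textbf{The endpoint construction does not work as stated.} Two problems. First, you posit a first weighted eigenfunction $\varphi^*>0$ solving $(-\Delta)^s\varphi^*+\varphi^*=\mu^* k\varphi^*$ with $\int k(\varphi^*)^2=1$. The infimum $\mu^*$ in \eqref{1.4} need not be attained: the embedding $H^s(\R^N)\hookrightarrow L^2(\R^N,k\,dx)$ with $k$ merely bounded is not compact, so such a minimizer generally does not exist. Second, even granting $\varphi^*$, the bound you derive from $F\ge0$, namely $I(t\Phi^*)\le\tfrac{\mu^* t^2}{2}-t\int h\varphi^*$, is negative only for $t\in\bigl(0,\tfrac{2\int h\varphi^*}{\mu^*}\bigr)$; so $\|V\|=t^*\sqrt{\mu^*}\lesssim \tfrac{2|h|_2|\varphi^*|_2}{\sqrt{\mu^*}}$. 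The ``calibration'' you propose requires this to exceed the admissible $\rho$, i.e.\ an additional inequality relating $|\varphi^*|_2$, $\mu^*$, $\e$ and $|k|_\infty$ that is not guaranteed by (\ref{HV}). The paper avoids all of this: it picks a near-minimizer $W$ of \eqref{1.4} with $\|W\|^2<l$ (possible precisely because $l>\mu^*$), uses $(f4)$ and Fatou to get $F(tw)/t^2\to l w^2/2$, and concludes $\lim_{t\to\infty}I(tW)/t^2\le\tfrac12(\|W\|^2-l)<0$; then $V=t_0W$ for $t_0$ large suffices. This neither needs the minimizer to exist nor any tuning of $\rho$ against $h$.

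\textbf{Boundedness of the Cerami sequence.} You only sketch this, invoking $2I-\langle I',\cdot\rangle$, $(f4)$, ``the spectral gap $\mu^*>l$,'' and Lemma~\ref{lem2.5}. The actual argument (Lemmas~\ref{lem2.4}--\ref{lem2.5}) is different: if $\|U_n\|\to\infty$, normalize $W_n=U_n/\|U_n\|$, show that $W_n\rightharpoonup W$ with $W\not\equiv0$, $W\ge0$, and that $W$ solves the limit problem \eqref{2.12} with coefficient $lk(x)$; Lemma~\ref{lem2.5} then shows this problem has no nontrivial nonnegative solution, again using $l>\mu^*$ (not the opposite). The $2I-\langle I',\cdot\rangle$ quantity does not yield boundedness in the asymptotically linear setting, since there is no Ambrosetti--Rabinowitz condition; this is exactly why the paper normalizes and passes to the limit problem instead.

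In short: the Ekeland half is fine, but the mountain-pass endpoint and the Cerami boundedness both rely on a flipped spectral hypothesis and on an eigenfunction and a $\rho$-calibration that are not available. Replace them with the paper's Lemma~\ref{lem2.3} (Fatou plus $l>\mu^*$) and Lemmas~\ref{lem2.4}--\ref{lem2.5}.
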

\begin{remark}
The assumption on the size of $h$ is a necessary condition to find a solution to $(\ref{P})$. In fact, proceeding as in \cite{CZ}, one can obtain a non-existence result to $(\ref{P})$ when $|h|_{2}$ is sufficiently large.
\end{remark}

\noindent
The proof of the above theorem goes as follows: under the assumption $l<\infty$, we first use the Ekeland variational principle to prove that for $|h|_{2}$ small enough, there exists a positive solution to (\ref{R0}) such that $I(U_{0})<0$. Then, we use a variant of Mountain Pass Theorem \cite{Ek}, to find a Cerami sequence which converges strongly in $\X$ to a solution $U_{1}$ of (\ref{R0}) with $I(U_{1})>0$. Clearly, these two solutions $U_{0}$ and $U_{1}$ are different.

\smallskip

\noindent

Our second result deals with the existence of positive solutions to (\ref{P}) in the superlinear case $l=\infty$. 
\begin{thm}\label{thm2}
Let $s\in (0,1)$ and $N> 2s$. Assume that  $f$ verifies $(f1)$-$(f4)$ with $l=\infty$. We also suppose that $k(x)\equiv 1$, $h\in C^{1}(\R^{N}) \cap L^{2}(\R^{N})$ is a radial function such that $h(x)\geq 0$, $h(x)\not \equiv 0$ and 
\begin{compactenum}[($H$)]
\item $x\cdot \nabla h(x)\in L^{1}(\R^{N})\cap L^{\infty}(\R^{N})$ and 
$$
x\cdot \nabla h(x)\geq 0 \mbox{ for all } x\in \R^{N}.
$$
\end{compactenum}
Let us assume that 
\begin{equation}
|h|_{2}<m_{1}:=\max_{t\geq 0}\left[\left(\frac{1}{2}-\frac{\varepsilon}{2}\right)t-\frac{C_{\varepsilon}}{p+1}S_{*}^{p+1}t^{p}\right],
\end{equation}
where $\varepsilon\in (0, 1)$ is fixed.
Then, (\ref{P}) admits two positive solutions $u_{3}, u_{4}\in H^{s}_{r}(\R^{N})$ such that $E(u_{3})<0<E(u_{4})$.
\end{thm}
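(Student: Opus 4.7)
The plan is to mimic the strategy of Theorem \ref{thm1}, but working entirely in the radial subspace $\Y$ of $\X$. Since $k(x)\equiv 1$ and $h$ is radial, the functional $I$ is invariant under rotations, so by the principle of symmetric criticality any critical point of $I|_{\Y}$ is a critical point of $I$ on $\X$. Restricting to radial functions is essential because the superlinearity $l=\infty$ rules out the eigenvalue argument $\mu^{*}>l$ that restored compactness in Theorem \ref{thm1}; instead we rely on the compact embedding of $H^{s}_{r}(\R^{N})$ into $L^{q}(\R^{N})$ for $q\in (2,2^{*}_{s})$.

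\emph{Step 1 (first solution with $E(u_{3})<0$).} Exactly as in Theorem \ref{thm1}, the smallness assumption $|h|_{2}<m_{1}$ combined with (\ref{2.7}) produces $\rho,\eta>0$ such that $I(U)\geq \eta$ on the sphere of radius $\rho$ in $\Y$, while $h\geq 0$, $h\not\equiv 0$ force $I$ to attain negative values inside $B_{\rho}(0)\cap\Y$. Applying Ekeland's variational principle inside $\overline{B_{\rho}(0)}\cap\Y$ produces a minimizing Palais-Smale sequence $(V_{n})$ at some level $c_{0}<0$. Since $c_{0}<\eta$, this sequence remains in the interior and is automatically bounded; the compact radial trace embedding allows passage to the limit, yielding a critical point $U_{3}\in\Y$ with $I(U_{3})=c_{0}<0$. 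Positivity of the trace $u_{3}(x)=U_{3}(x,0)$ then follows from $(f1)$, $h\geq 0$ and the strong maximum principle for the extension problem (\ref{R0}).

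\emph{Step 2 (mountain pass geometry and Cerami sequence).} The same geometry gives $I(0)=0$ and $I(U)\geq \eta$ on $\|U\|=\rho$, and since $l=\infty$, one has $I(tU)\to -\infty$ as $t\to+\infty$ for any fixed $U\in\Y$ whose trace is nonnegative and nontrivial. An Ekeland-type variant of the mountain pass theorem then produces a Cerami sequence $(W_{n})\subset\Y$ at a level $c_{1}\geq \eta>0$:
\[
I(W_{n})\to c_{1},\qquad (1+\|W_{n}\|)\,I'|_{\Y}(W_{n})\to 0.
\]

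\emph{Step 3 (boundedness of $(W_{n})$: the main obstacle).} Without an Ambrosetti-Rabinowitz inequality and with only $l=\infty$, this is where the proof really differs from Theorem \ref{thm1}. The idea is to combine the standard identity
\[
I(W_{n})-\tfrac{1}{2}I'(W_{n})[W_{n}]=\int_{\R^{N}}\Bigl[\tfrac{1}{2}f(W_{n}^{0})W_{n}^{0}-F(W_{n}^{0})\Bigr]\,dx-\tfrac{1}{2}\int_{\R^{N}} h\,W_{n}^{0}\,dx,
\]
where $W_{n}^{0}=W_{n}(\cdot,0)$, with a Pohozaev-type identity for the extension problem (\ref{R0}), obtained by formally testing against $x\cdot\nabla_{x}W_{n}$. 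Hypothesis $(H)$ is tailored so that, after integration by parts, the term $\int h(x\cdot\nabla W_{n}^{0})\,dx$ is controlled by the $L^{1}\cap L^{\infty}$ bounds on $x\cdot\nabla h$ and has the right sign thanks to $x\cdot\nabla h\geq 0$; radiality of $W_{n}$ is what makes the Pohozaev manipulation in the half-space tractable. Adding this identity to the one above yields a uniform bound $\sup_{n}\|W_{n}\|<\infty$.

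\emph{Step 4 (strong convergence and positivity).} Once $(W_{n})$ is bounded in $\Y$, extract $W_{n}\rightharpoonup U_{4}$. The compact radial trace embedding gives $W_{n}^{0}\to U_{4}^{0}$ strongly in $L^{q}(\R^{N})$ for $q\in (2,2^{*}_{s})$, which together with $(f1)$-$(f3)$ implies
\[
\int_{\R^{N}} f(W_{n}^{0})\varphi(\cdot,0)\,dx\longrightarrow \int_{\R^{N}} f(U_{4}^{0})\varphi(\cdot,0)\,dx\quad \text{for every } \varphi\in\Y,
\]
so $I'(U_{4})=0$. Using $I'(W_{n})[W_{n}-U_{4}]\to 0$ upgrades the weak convergence to strong convergence in $\Y$, giving $I(U_{4})=c_{1}>0$; the trace $u_{4}=U_{4}(\cdot,0)$ is positive by the maximum principle and $(f1)$. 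Since $E(u_{3})<0<E(u_{4})$, the two solutions are distinct.
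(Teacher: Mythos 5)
Your Steps 1, 2 and 4 are consistent with the paper, but Step 3 is a genuine gap, and it is precisely the gap that forces the paper to use Jeanjean's monotonicity trick rather than a direct mountain pass argument.

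The difficulty is that a Pohozaev-type identity for the extension problem is an identity satisfied by \emph{actual} critical points, not by Palais--Smale or Cerami sequences. If $(W_n)$ is only a Cerami sequence, ``testing'' $I'(W_n)$ against $x\cdot\nabla_x W_n$ does not yield the Pohozaev relation with a vanishing error: the quantity $\langle I'(W_n), x\cdot\nabla_x W_n\rangle$ is not controlled by $(1+\|W_n\|)\,\|I'(W_n)\|_{*}\to 0$, because $x\cdot\nabla_x W_n$ is neither in $\Y$ in general nor bounded in $\Y$-norm by $\|W_n\|_{\Y}$ (it carries an extra unbounded weight $|x|$ and an extra derivative). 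So you cannot ``add'' the identity $I(W_n)-\tfrac12 I'(W_n)[W_n]$ to a Pohozaev relation and extract a bound on $\|W_n\|$; the Pohozaev relation simply is not available at the PS-sequence level. (This is also why the radiality assumption alone does not rescue you: radiality helps compactness in Step 4, but it does not make Pohozaev valid for non-critical points.)

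The paper circumvents this by applying Jeanjean's abstract result (Theorem \ref{thm3.2}) to the family $I_\lambda(U)=A(U)-\lambda B(U)$ with $\lambda\in[\tfrac12,1]$: for almost every $\lambda$ one obtains a \emph{bounded} Palais--Smale sequence for $I_\lambda$, which by the compact radial embedding converges to a genuine critical point $U_j$ of $I_{\lambda_j}$ for some $\lambda_j\uparrow 1$. Because each $U_j$ is an actual solution of the $\lambda_j$-problem, the Pohozaev identity (\ref{3.2}) holds exactly, and only at that stage is assumption $(H)$ invoked (to control $\int\nabla h\cdot x\,u_j$, which has a favorable sign) to deduce a uniform bound on $\|U_j\|_{\Y}$. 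One then checks that $(U_j)$ is a Palais--Smale sequence for $I=I_1$ and concludes. You should replace your Step 2--3 with this two-parameter scheme; as written, the boundedness claim for the Cerami sequence is unjustified.
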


Due to the presence of radial functions $k(x)=1$ and $h=h(|x|)$, we work in the subspace $\Y$ of the weight space $\X$, involving the functions which are radial with respect to $x\in \R^{N}$. 
We point out that the methods used to study the asymptotically linear case do not work any more. Indeed, to prove that a Palais-Smale sequence converges to a second solution different from the first one, we have to use the concentration-compactness principle which seems very hard to apply without requiring further assumptions on $k(x)$ and $f(t)$.\\
This time, we use the compactness of $\Y$ into $L^{q}(\R^{N})$ for any $q\in (2, 2^{*}_{s})$, and the Ekeland principle, to get a first solution to (\ref{R0}) with negative energy, provided that $|h|_{2}$ is sufficiently small. The existence of a second solution with positive energy is obtained by combining a generic result due to Jeanjean \cite{J}, which allows us to prove the existence of bounded Palais-Smale sequences for parametrized functionals, the Pohozaev identity for the fractional Laplacian and the assumption $(H)$, which guarantee the existence of a bounded Palais-Smale sequence for $I$, which converges to a radial positive solution to (\ref{R0}).
\smallskip

\noindent
The plan of the paper is the following:  in section $2$ we give some preliminaries which will be useful along the paper.
In section $3$ we consider the asymptotically linear case and we prove the existence of two positive solutions via mountain pass theorem. In section $4$ we study the superlinear case, and we provide the proof of Theorem \ref{thm2}. Finally, as applications of our results, we present some concrete examples.

\section{Preliminaries}

\noindent
In this section, we briefly recall some properties of the fractional Sobolev spaces, and we introduce some notations which we will used along the paper.\\
For any $s\in (0,1)$, we define $\mathcal{D}^{s, 2}(\R^{N})$ as the completion of $C^{\infty}_{0}(\R^{N})$ with respect to
$$
[u]^{2}=\iint_{\R^{2N}} \frac{|u(x)-u(y)|^{2}}{|x-y|^{N+2s}} \, dx \, dy =|(-\Delta)^{\frac{s}{2}} u|^{2}_{L^{2}(\R^{N})},
$$
that is
$$
\mathcal{D}^{s, 2}(\R^{N})=\left\{u\in L^{2^{*}_{s}}(\R^{N}): [u]<\infty\right\}.
$$
Now, let us introduce the fractional Sobolev space
$$
H^{s}(\R^{N})= \left\{u\in L^{2}(\R^{N}) : \frac{|u(x)-u(y)|}{|x-y|^{\frac{N+2s}{2}}} \in L^{2}(\R^{2N}) \right \}
$$
endowed with the natural norm 
$$
\|u\|_{H^{s}(\R^{N})} = \sqrt{[u]^{2} + |u|_{L^{2}(\R^{N})}^{2}}.
$$

\noindent
For the convenience of the reader we recall the following fundamental embeddings:
\begin{thm}\cite{DPV}\label{Sembedding}
Let $s\in (0,1)$ and $N>2s$. Then there exists a sharp constant $S_{*}=S(N, s)>0$
such that for any $u\in H^{s}(\R^{N})$
\begin{equation}\label{FSI}
|u|^{2}_{L^{2^{*}_{s}}(\R^{N})} \leq S_{*} [u]^{2}. 
\end{equation}
Moreover $H^{s}(\R^{N})$ is continuously embedded in $L^{q}(\R^{N})$ for any $q\in [2, 2^{*}_{s}]$ and compactly in $L^{q}_{loc}(\R^{N})$ for any $q\in [2, 2^{*}_{s})$. 
\end{thm}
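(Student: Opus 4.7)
The plan is to decompose the statement into three pieces---the sharp Sobolev inequality \eqref{FSI}, the continuous embedding into $L^q$ for every $q\in[2,2^{*}_{s}]$, and the local compactness into $L^{q}_{loc}$ for $q\in[2,2^{*}_{s})$---and to handle each via a standard Fourier/interpolation argument.

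For \eqref{FSI} I would go through the Riesz potential: by Plancherel one has $[u]^{2}=c_{N,s}\int_{\R^{N}}|\xi|^{2s}|\hat u(\xi)|^{2}\,d\xi$, so setting $v=(-\Delta)^{s/2}u\in L^{2}(\R^{N})$ gives $u=I_{s}v$, where $I_{s}$ is convolution with the Riesz kernel of order $2s$, i.e.\ a constant multiple of $|x|^{-(N-2s)}$. The Hardy-Littlewood-Sobolev inequality with the conjugate pair $(2,2^{*}_{s})$ then yields $|u|_{L^{2^{*}_{s}}(\R^{N})}\le C|v|_{L^{2}(\R^{N})}$, which is exactly \eqref{FSI}. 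The existence of an optimal constant $S_{*}$ follows either by a direct minimization/concentration-compactness argument applied to the associated Rayleigh quotient, or, if one insists on the explicit sharp constant, by invoking Lieb's classification of HLS extremals.

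The continuous embedding for intermediate $q$ is a one-line interpolation: if $q\in(2,2^{*}_{s})$ and $1/q=\theta/2+(1-\theta)/2^{*}_{s}$ with $\theta\in(0,1)$, then H\"older gives
\[
|u|_{L^{q}(\R^{N})}\le |u|_{L^{2}(\R^{N})}^{\theta}\,|u|_{L^{2^{*}_{s}}(\R^{N})}^{1-\theta}\le C\,\|u\|_{H^{s}(\R^{N})},
\]
while the endpoints $q=2$ and $q=2^{*}_{s}$ are trivial. For the local compactness, I would fix a bounded domain $\Omega\subset\R^{N}$ and a bounded sequence $(u_{n})$ in $H^{s}(\R^{N})$. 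From the Gagliardo seminorm one derives the standard translation estimate
\[
\int_{\R^{N}}|u_{n}(x+h)-u_{n}(x)|^{2}\,dx\le C_{s}|h|^{2s}\,[u_{n}]^{2},
\]
so translations are equismall in $L^{2}(\Omega)$ uniformly in $n$; together with the global $L^{2^{*}_{s}}$-bound from \eqref{FSI}, the Riesz-Fr\'echet-Kolmogorov criterion yields relative compactness in $L^{2}(\Omega)$, which upgrades to $L^{q}(\Omega)$ for every $q\in[2,2^{*}_{s})$ by one more H\"older interpolation.

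The main technical obstacle is the Sobolev inequality itself, and specifically the extraction of a \emph{sharp} constant: the embedding part is a routine HLS application, but identifying the optimal $S_{*}=S(N,s)$ requires either Lieb's sharp HLS or a variational/concentration-compactness analysis of the minimizing problem defining $S_{*}$. For the applications of this theorem in the rest of the paper, however, only the existence of some finite $S_{*}$ is needed (it appears only as an upper bound in \eqref{HV} and in the statement of Theorem \ref{thm2}), so the sharpness question can in principle be side-stepped.
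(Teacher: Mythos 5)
The paper does not prove this theorem at all---it is a standard result simply imported from the cited reference \cite{DPV} (Di Nezza--Palatucci--Valdinoci), so there is no ``paper's own proof'' to compare against. Your sketch is a correct and entirely standard route to the result, and is in fact essentially how \cite{DPV} proceed: Plancherel to rewrite $[u]^2$ as a weighted Fourier integral, the Hardy--Littlewood--Sobolev inequality (or equivalently the mapping properties of the Riesz potential $I_s$) to get the critical embedding, H\"older interpolation for the intermediate exponents, and a translation estimate combined with the Riesz--Fr\'echet--Kolmogorov criterion for the local compactness. Two small points worth flagging. First, your Fourier-side translation bound $\int|u(x+h)-u(x)|^2\,dx\le C_s|h|^{2s}[u]^2$ is correct for $s\in(0,1)$, since $|e^{ih\cdot\xi}-1|^2\le\min(4,|h|^2|\xi|^2)\le C|h|^{2s}|\xi|^{2s}$, but one also needs the tail condition of Riesz--Fr\'echet--Kolmogorov; this is supplied on a bounded $\Omega$ by a cutoff together with the uniform $L^{2^*_s}$ bound, which you implicitly invoke. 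Second, your closing observation is apt and worth keeping explicit: in this paper $S_*$ enters only as an a priori upper bound (in \eqref{HV} and in Theorem \ref{thm2}), so no sharpness is actually used downstream; the word ``sharp'' in the statement is inherited from \cite{DPV} rather than exploited, and a non-optimal constant from HLS would serve just as well.
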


\noindent
We also define the space of radial functions in $H^{s}(\R^{N})$
$$
H^{s}_{r}(\R^{N})=\left \{u\in H^{s}(\R^{N}): u(x)=u(|x|)\right\}. 
$$
Related to this space, the following compactness result due to Lions \cite{Lions} holds:
\begin{thm}\cite{Lions}\label{Lions}
Let $s\in (0,1)$ and $N\geq 2$. Then $H^{s}_{r}(\R^{N})$ is compactly in $L^{q}(\R^{N})$ for any $q\in (2, 2^{*}_{s})$.
\end{thm}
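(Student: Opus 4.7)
The plan is to use a weak convergence argument combined with two key ingredients: the local compactness from Theorem~\ref{Sembedding}, and a radial decay estimate at infinity that exploits the rotational symmetry of elements of $H^{s}_{r}(\R^{N})$.

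Given a bounded sequence $\{u_{n}\} \subset H^{s}_{r}(\R^{N})$ with $\|u_{n}\|_{H^{s}(\R^{N})} \leq M$, one extracts a weakly convergent subsequence $u_{n} \rightharpoonup u$ in $H^{s}_{r}(\R^{N})$ and sets $v_{n} := u_{n} - u$, so that $\{v_{n}\}$ is bounded in $H^{s}_{r}(\R^{N})$ and $v_{n} \rightharpoonup 0$. Fix $q \in (2, 2^{*}_{s})$; the goal is to show $v_{n} \to 0$ strongly in $L^{q}(\R^{N})$. From the local compactness in Theorem~\ref{Sembedding} one already has $v_{n} \to 0$ in $L^{q}(B_{R})$ for every fixed $R > 0$, so the task reduces to establishing the uniform tail estimate
\begin{equation*}
\lim_{R \to \infty} \sup_{n} \|v_{n}\|_{L^{q}(\R^{N} \setminus B_{R})} = 0.
\end{equation*}

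The key ingredient is a radial decay lemma: for any radial $w \in L^{2}(\R^{N})$ (with $N \geq 2$) and any $|y| \geq 2$,
\begin{equation*}
\int_{B_{1}(y)} |w|^{2} \, dx \leq C \, |y|^{-(N-1)} \|w\|_{L^{2}(\R^{N})}^{2},
\end{equation*}
which follows from a geometric covering argument: on the sphere $\{|z| = |y|\}$ one finds at least $c \, |y|^{N-1}$ pairwise $2$-separated points; the corresponding unit balls are pairwise disjoint and contained in a fixed-width spherical annulus, and by radial symmetry each of these integrals equals $\int_{B_{1}(y)} |w|^{2} \, dx$. Applying this to $v_{n}$ shows that the $L^{2}$ mass on any unit ball centered far from the origin is uniformly small in $n$. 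Combining this with a fractional version of Lions' vanishing lemma -- stating that a sequence bounded in $H^{s}(\R^{N})$ with $\sup_{y \in \R^{N}} \int_{B_{R_{0}}(y)} |v_{n}|^{2} \, dx \to 0$ tends to zero in $L^{q}(\R^{N})$ for every $q \in (2, 2^{*}_{s})$ -- one concludes by contradiction: if $\|v_{n}\|_{L^{q}(\R^{N})} \geq \delta > 0$ along a subsequence, vanishing must fail, so there exist $R_{0}, \eta > 0$ and $y_{n} \in \R^{N}$ with $\int_{B_{R_{0}}(y_{n})} |v_{n}|^{2} \, dx \geq \eta$; the case $\{y_{n}\}$ bounded contradicts $v_{n} \to 0$ in $L^{2}_{loc}(\R^{N})$, while the case $|y_{n}| \to \infty$ is ruled out by the radial decay lemma.

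The main obstacle lies in setting up these two ingredients in the fractional setting. The radial decay lemma is purely geometric and transfers verbatim from the classical case. The fractional Lions' vanishing lemma, on the other hand, requires a careful argument interpolating between the Sobolev embedding $H^{s}(\R^{N}) \hookrightarrow L^{2^{*}_{s}}(\R^{N})$ from Theorem~\ref{Sembedding} and the local $L^{2}$ smallness hypothesis, via a covering of $\R^{N}$ by balls with uniformly finite overlap. Once both ingredients are available, the dichotomy argument above is a routine weak-to-strong upgrade.
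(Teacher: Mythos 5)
The paper states this as a cited result from \cite{Lions} and gives no proof of its own, so there is no in-paper argument to compare against; your job was therefore to supply an independent proof, and the one you propose is correct.

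Your route is, however, not the one usually attributed to Lions and most commonly reproduced in the fractional literature. The standard argument establishes a Strauss-type pointwise radial decay, namely that a radial $u\in H^{s}(\R^{N})$ satisfies $|u(x)|\leq C\,|x|^{-(N-2s)/2}\,\|u\|_{H^{s}(\R^{N})}$ for $|x|\geq 1$, and then bounds the tail directly by interpolation, $\int_{|x|>R}|u|^{q}\,dx\leq \|u\|_{L^{\infty}(|x|>R)}^{q-2}\,|u|_{2}^{2}\leq C\,R^{-(q-2)(N-2s)/2}\,\|u\|_{H^{s}}^{q}$, which tends to $0$ uniformly on bounded sets; combining this with the local compactness of Theorem \ref{Sembedding} finishes the proof. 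You instead avoid the pointwise estimate (whose fractional proof requires some care) and work at the $L^{2}$ level: your geometric packing lemma gives $\int_{B_{1}(y)}|w|^{2}\,dx\leq C|y|^{-(N-1)}|w|_{2}^{2}$ for radial $w$ and $N\geq 2$ (valid with no smoothness at all), and you feed this into the fractional concentration-compactness vanishing lemma by a dichotomy on the location of the concentrating balls. Both ingredients are correct, your covering-of-$B_{R_{0}}(y_{n})$-by-unit-balls step is sound, and the contradiction is reached cleanly. What the classical route buys is self-containment and a quantitative tail bound; what yours buys is that the decay input is trivial to prove and lives entirely in $L^{2}$, at the price of having to invoke (and ultimately prove) the fractional vanishing lemma, which is itself a covering-plus-interpolation argument of comparable technical weight.

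One small point worth flagging: the statement in the paper requires $N\geq 2$, and your radial decay rate $|y|^{-(N-1)}$ degenerates exactly at $N=1$; you correctly exploit the hypothesis. Also note that your radial decay lemma, being purely about $L^{2}$ functions, uses no regularity of $v_{n}$; the $H^{s}$ bound enters only through the vanishing lemma. It is worth making explicit in a write-up that the unit balls in the packing argument must be taken pairwise disjoint with centers all at the same radius $|y|$, so that radial symmetry makes their $w$-masses equal and their union sits inside the annulus $\{|y|-1\leq|z|\leq|y|+1\}$; otherwise the counting bound $c\,|y|^{N-1}$ does not translate directly into the claimed inequality.
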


We also state the following useful result obtained in \cite{CW}:
\begin{lem}\cite{CW}\label{Strauss}
Let $(X, \|\cdot\|)$ be a Banach space such that $X$ is embedded respectively continuously and compactly into $L^{q}(\R^{N})$ for $q\in [q_{1}, q_{2}]$ and $q\in (q_{1}, q_{2})$, where $q_{1}, q_{2}\in (0, \infty)$.
Assume that $(u_{n})\subset X$, $u: \R^{N} \rightarrow \R$ is a measurable function and $P\in C(\R, \R)$ is such that
\begin{compactenum}[(i)]
\item $\displaystyle{\lim_{|t|\rightarrow 0} \frac{P(t)}{|t|^{q_{1}}}=0}$, \\
\item $\displaystyle{\lim_{|t|\rightarrow \infty} \frac{P(t)}{|t|^{q_{2}}}=0}$,\\
\item $\displaystyle{\sup_{n\in \N} \|u_{n}\|_{X}<\infty}$,\\
\item $\displaystyle{\lim_{n \rightarrow \infty} P(u_{n}(x))=u(x)} \mbox{ for a.e. } x\in \R^{N}$.
\end{compactenum}
Then, up to a subsequence, we have
$$
\lim_{n\rightarrow \infty} \|P(u_{n})-u\|_{L^{1}(\R^{N})}=0.
$$
\end{lem}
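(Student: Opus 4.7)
The plan is to apply the Vitali convergence theorem to the sequence $(P(u_n)) \subset L^1(\R^N)$ with pointwise limit $u$. Condition (iv) already supplies almost everywhere convergence, so the actual work consists in establishing uniform integrability together with uniform control of the tails at infinity.

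The first step is to extract a useful subsequence and identify the pointwise limit. By (iii) and the assumed compact embedding $X \hookrightarrow L^q(\R^N)$ for $q \in (q_1, q_2)$, fix any such $q$ and pass to a subsequence so that $u_n \to v$ strongly in $L^q(\R^N)$ and a.e.\ on $\R^N$ for some $v \in L^q(\R^N)$. Continuity of $P$ combined with (iv) then forces $u(x) = P(v(x))$ for a.e.\ $x$. Moreover, the continuous embeddings into $L^{q_1}(\R^N)$ and $L^{q_2}(\R^N)$ show that $(u_n)$ is uniformly bounded in each of these Lebesgue spaces.

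Next I decompose the bound on $P$ using (i) and (ii): given $\varepsilon > 0$, there exist $0 < \delta < M$ and $K_\varepsilon > 0$ such that $|P(t)| \leq \varepsilon |t|^{q_1}$ on $\{|t|\leq \delta\}$, $|P(t)| \leq \varepsilon |t|^{q_2}$ on $\{|t|\geq M\}$, and $|P(t)| \leq K_\varepsilon$ everywhere. For any measurable $E \subset \R^N$, splitting according to these three ranges gives
$$\int_E |P(u_n)|\, dx \leq \varepsilon \bigl( \|u_n\|_{L^{q_1}}^{q_1} + \|u_n\|_{L^{q_2}}^{q_2}\bigr) + K_\varepsilon |E| \leq C\varepsilon + K_\varepsilon |E|,$$
which is made arbitrarily small by first fixing $\varepsilon$ small and then restricting $|E|$, yielding uniform integrability (and, via Fatou, $u \in L^1(\R^N)$). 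Tightness is obtained from the same decomposition combined with Chebyshev's inequality $|\{|u_n| \geq \delta\} \cap B_R^c| \leq \delta^{-q} \int_{B_R^c} |u_n|^q\, dx$, which leads to
$$\int_{B_R^c} |P(u_n)|\, dx \leq C\varepsilon + K_\varepsilon \delta^{-q} \int_{B_R^c} |u_n|^q\, dx.$$
The remaining tail tends to $0$ uniformly in $n$ as $R \to \infty$: for $n$ large this is a consequence of $\|u_n - v\|_{L^q} \to 0$ together with $v \in L^q$, while the finitely many small indices are handled by the individual integrability of each $u_n$.

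With a.e.\ convergence, uniform integrability, and tightness simultaneously in force, Vitali's theorem yields $\|P(u_n) - u\|_{L^1(\R^N)} \to 0$ along the chosen subsequence. The main obstacle is the intermediate range, where the only pointwise bound is the non-decaying constant $K_\varepsilon$; this is precisely where the strict inclusion $q_1 < q < q_2$ is essential, since it allows one to invoke the compact embedding into $L^q$, converting mere norm-boundedness in $X$ into genuine control of where the functions $u_n$ concentrate.
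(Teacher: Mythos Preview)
The paper does not give its own proof of this lemma: it is simply quoted from \cite{CW} (Chang--Wang), so there is nothing in the paper to compare your argument against. Your proof via Vitali's theorem is correct and is in fact the standard route for this Strauss--Berestycki--Lions type compactness result; the key idea of using the compact embedding at an intermediate exponent $q\in(q_1,q_2)$ to control the intermediate-range set $\{\delta\leq|u_n|\leq M\}$ in measure (both locally and at infinity) is exactly what drives the original proofs.

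One small wording issue: you write ``$|P(t)|\leq K_\varepsilon$ everywhere'', but $P$ need not be globally bounded (the bound $|P(t)|\leq\varepsilon|t|^{q_2}$ for $|t|\geq M$ is not a uniform bound). What you actually use, and what is true, is $|P(t)|\leq K_\varepsilon$ on the compact intermediate range $\delta\leq|t|\leq M$; since your subsequent estimates only invoke $K_\varepsilon$ on that set, the argument goes through unchanged once the sentence is corrected.
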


\noindent

Now, let us denote by $\mathcal{D}(\R^{N+1}_{+}, y^{1-2s})$ the completion of $C^{\infty}_{0}(\overline{\R^{N+1}_{+}})$ under the norm
$$
\|U\|^{2}_{\mathcal{D}(\R^{N+1}_{+}, y^{1-2s})}=\iint_{\R^{N+1}_{+}} y^{1-2s} |\nabla U|^{2} \, dx dy.
$$
It is known \cite{CS} that for any $U\in \mathcal{D}(\R^{N+1}_{+}, y^{1-2s})$, its trace $U(x, 0)$ belongs $\mathcal{D}^{s,2}(\R^{N})$ and 
that it is possible to define a trace continous map
\begin{equation}\label{Seok}
[U(\cdot, 0)]\leq C \|U\|_{\mathcal{D}(\R^{N+1}_{+}, y^{1-2s})}
\end{equation}
By combining (\ref{FSI}) and (\ref{Seok}), we can derive the following Sobolev inequality
\begin{equation}\label{Mia}
|U(\cdot, 0)|_{L^{2^{*}}_{s}(\R^{N})}\leq C \|U\|_{\mathcal{D}(\R^{N+1}_{+}, y^{1-2s})}
\end{equation}
for any $U\in \mathcal{D}(\R^{N+1}_{+}, y^{1-2s})$.\\
Then, as proved in \cite{CS}, for any $u\in \mathcal{D}^{s, 2}(\R^{N})$ there exists $U\in \mathcal{D}(\R^{N+1}_{+}, y^{1-2s})$, 
called the $s$-harmonic extension of $u$,  such that
\begin{equation}\label{Pext}
\left\{
\begin{array}{ll}
\dive(y^{1-2s} \nabla U)=0 & \mbox{ in } \R^{N+1}_{+} \\
U(x, 0)=u(x) & \mbox{ on } \R^{N}
\end{array}.
\right.
\end{equation}
Moreover,
\begin{equation*}
\frac{\partial U}{\partial \nu^{1-2s}}=-\lim_{y\rightarrow 0^{+}} y^{1-2s}\frac{\partial U}{\partial y}=(-\Delta)^{s}u(x)
\end{equation*}
and
\begin{equation*}
\int_{\R^{N+1}_{+}} y^{1-2s}|\nabla U|^{2} \,dx dy=\kappa_{s} [u]_{H^{s}(\R^{N})}^{2}.
\end{equation*}
Therefore, we can reformulate (\ref{P}) in a local way, and we can investigate the following extended problem in $\R^{N+1}_{+}$
\begin{equation}\label{R}
\left\{
\begin{array}{ll}
\dive(y^{1-2s} \nabla U)=0 & \mbox{ in } \R^{N+1}_{+} \\
\frac{\partial U}{\partial \nu^{1-2s}}=\kappa_{s} [-U(x, 0)+k(x) f(U(x, 0))+h(x)]  & \mbox{ on } \R^{N} \\
\end{array}.
\right.
\end{equation}
Qualitatively, the result of \cite{CS} states that one can localize the fractional Laplacian by adding an additional variable. This argument is fundamental to apply known variational methods. \\
At this point, we introduce the following functional space  
$$
\X=\{U\in \mathcal{D}(\R^{N+1}_{+}, y^{1-2s}): \int_{\R^{N}} |U(x, 0)|^{2}\, dx<\infty\}
$$
endowed with the norm 
$$
\|U\|^{2}_{\X}=\int_{\R^{N+1}_{+}} y^{1-2s}|\nabla U|^{2} \,dx dy+\int_{\R^{N}} |U(x, 0)|^{2}\, dx.
$$

\noindent
We recall that $\X$ is locally compactly embedded in the weight space $L^{2}(\R^{N+1}_{+}, y^{1-2s})$ endowed with the norm 
$$
\|U\|_{L^{2}(\R^{N+1}_{+}, y^{1-2s})}=\int_{\R^{N+1}_{+}} y^{1-2s}|U|^{2} \,dx dy.
$$
More precisely, we have 
\begin{lem}\cite{DMV}\label{DMV}
Let $R>0$ and let $\mathcal{T}$ be a subset of $\mathcal{D}(\R^{N+1}_{+}, y^{1-2s})$ such that 
$$
\sup_{U\in \mathcal{T}} \iint_{\R^{N+1}_{+}} y^{1-2s} |\nabla U|^{2} \, dx dy<\infty.
$$
Then $\mathcal{T}$ is pre-compact in $L^{2}(B_{R}^{+}, y^{1-2s})$, where $B_{R}^{+}=\{(x, y)\in \R^{N+1}_{+}: |(x, y)|<R\}$.\\
There exists a constant $C_{0}>0$ such that for all $U\in \mathcal{D}(\R^{N+1}_{+}, y^{1-2s})$ it holds
$$
\left(\iint_{\R^{N+1}_{+}} y^{1-2s} |U|^{2\gamma} \, dx dy\right)^{\frac{1}{2\gamma}}\leq C_{0} \left(\iint_{\R^{N+1}_{+}} y^{1-2s} |\nabla U|^{2} \, dx dy\right)^{\frac{1}{2}}
$$
where $\gamma=1+\frac{2}{N-2s}$.
\end{lem}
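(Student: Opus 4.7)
The plan is to establish the Sobolev-type inequality first and then deduce the local compactness from it via a weighted Rellich--Kondrachov argument. Both assertions hinge on a single structural fact: the weight $w(x,y)=|y|^{1-2s}$, extended evenly across $\{y=0\}$ to all of $\R^{N+1}$, belongs to the Muckenhoupt class $A_{2}$. Since $s\in(0,1)$ forces $1-2s\in(-1,1)$, both $|y|^{1-2s}$ and $|y|^{-(1-2s)}$ are locally integrable on $\R^{N+1}$, and a direct estimate over Euclidean balls (splitting according to whether a ball is centered far from or close to $\{y=0\}$) gives
$$
\sup_{B\subset \R^{N+1}}\Bigl(\frac{1}{|B|}\int_{B}|y|^{1-2s}\,dx\,dy\Bigr)\Bigl(\frac{1}{|B|}\int_{B}|y|^{-(1-2s)}\,dx\,dy\Bigr)<\infty.
$$

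For the weighted Sobolev inequality, given $U\in \mathcal{D}(\R^{N+1}_{+},y^{1-2s})$ I would extend it to $\tilde U(x,y):=U(x,|y|)$ on $\R^{N+1}$. Since $U$ is the limit of $C^{\infty}_{0}(\overline{\R^{N+1}_{+}})$-functions, $\tilde U$ lies in the weighted homogeneous Sobolev space associated to $w$ and its weak gradient on $\R^{N+1}_{-}$ agrees (with the obvious sign change in the $y$-component) with that on $\R^{N+1}_{+}$. The Fabes--Kenig--Serapioni weighted Sobolev inequality for $A_{2}$-weights then yields
$$
\Bigl(\int_{\R^{N+1}}|y|^{1-2s}|\tilde U|^{2\gamma}\,dx\,dy\Bigr)^{\!\frac{1}{2\gamma}}\leq C\Bigl(\int_{\R^{N+1}}|y|^{1-2s}|\nabla \tilde U|^{2}\,dx\,dy\Bigr)^{\!\frac{1}{2}},
$$
where the exponent is forced by the homogeneous dimension $d:=N+2-2s$ associated to the measure $|y|^{1-2s}\,dx\,dy$ (balls of radius $r$ have weighted volume $\sim r^{d}$), so that $\tfrac{2d}{d-2}=\tfrac{2(N+2-2s)}{N-2s}$ matches $2\gamma$ with $\gamma=1+\tfrac{2}{N-2s}$. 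Splitting each integral into the equal contributions from the two half-spaces produces the stated inequality with an explicit constant $C_{0}$.

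For the precompactness statement, if $\mathcal{T}$ is bounded in $\mathcal{D}(\R^{N+1}_{+},y^{1-2s})$, the Sobolev inequality just proved shows that $\mathcal{T}$ is bounded in $L^{2\gamma}(\R^{N+1}_{+},y^{1-2s})$, hence in $L^{2}(B_{R}^{+},y^{1-2s})$ (the ball has finite weighted measure, so H\"older applies); combined with the uniform bound on the weighted Dirichlet integral, this makes $\mathcal{T}$ bounded in the weighted Sobolev space $W^{1,2}(B_{R}^{+},y^{1-2s})$. I then invoke the weighted Rellich--Kondrachov theorem of Fabes--Kenig--Serapioni, valid for any $A_{2}$-weight, which asserts compactness of the embedding $W^{1,2}(B_{R}^{+},w)\hookrightarrow L^{q}(B_{R}^{+},w)$ for every $q<2\gamma$; the choice $q=2$ closes the proof. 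The main obstacle is the careful verification of the $A_{2}$ condition for balls straddling the boundary $\{y=0\}$ versus those lying strictly inside a half-space, a computation that must be done in two regimes and then glued; once the $A_{2}$ property is secured, both assertions reduce to standard citations from the Fabes--Kenig--Serapioni theory.
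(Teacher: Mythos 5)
The paper itself offers no proof of this lemma: it is stated with a direct citation to [DMV] (Dipierro--Medina--Valdinoci) and then used as a black box. There is therefore no ``paper's own proof'' to compare against; I can only assess your proposal on its own merits.

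Your structural plan is the natural one, and the identification of the right ingredients (the weight $|y|^{1-2s}$ is $A_{2}$ on $\R^{N+1}$, even reflection across $\{y=0\}$, Fabes--Kenig--Serapioni theory, weighted Rellich--Kondrachov) is correct. The precompactness assertion does indeed follow from FKS once the $A_{2}$ property is verified and $\mathcal{T}$ is seen to be bounded in the weighted $W^{1,2}(B_{R}^{+},y^{1-2s})$; your argument there is fine.

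The gap is in the Sobolev inequality. The FKS weighted Sobolev inequality is a \emph{local}, volume-normalized estimate on balls, valid for exponents $1\le k\le \frac{n}{n-2}+\delta$ where $n=N+1$ is the \emph{Euclidean} ambient dimension and $\delta>0$ is an unspecified quantity coming from the reverse H\"older property of the $A_{2}$ weight. It does not automatically produce the global, scale-invariant inequality with the exponent $2\gamma=\frac{2d}{d-2}$ dictated by the ``homogeneous dimension'' $d=N+2-2s$ of the weighted measure. In fact, for $s>\tfrac12$ one has $d<n$ and hence $\frac{d}{d-2}>\frac{n}{n-2}$, so the required exponent sits strictly above the range that the general FKS statement guarantees unless one has quantitative control of $\delta$. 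Nor does a naive rescaling patch this: the local FKS estimate carries a factor of $w(B_{r})^{1/2k-1/2}$ that only collapses to a scale-free constant at exactly the critical $k=\frac{d}{d-2}$, which is precisely the endpoint that general $A_{2}$ theory does not reach. To close the argument you need to exploit the specific power-weight structure --- e.g.\ derive the sharp inequality from a Caffarelli--Kohn--Nirenberg inequality in $\R^{N+1}$ for the weight $|y|^{1-2s}$, or argue directly by a trace-plus-interpolation computation as in the original [DMV] source --- rather than citing FKS for the sharp exponent. Once that is repaired, the deduction of precompactness from the local Sobolev bound and the weighted Rellich--Kondrachov theorem goes through as you wrote.
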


\begin{remark}
With abuse of notation, we will denote by $u$ the trace of a function $U\in \mathcal{D}(\R^{N+1}_{+}, y^{1-2s})$. Moreover, we denote by $|u|_{p}$ the $L^{p}$-norm of a function $u$ belonging to $L^{p}(\R^{N})$.\\
In what follows, for simplicity, we will omit the constant $\kappa_{s}$ appearing in the extended problem $(\ref{R})$.  
\end{remark}

\section{Asymptotically linear case: proof of Theorem \ref{thm1}}
\noindent
In this section we discuss the existence of positive solutions to (\ref{P}) under the assumption that $f$ is asymptotically linear.
Taking into account the results presented in Section $2$, we can consider the following degenerate elliptic problem
\begin{equation}\label{2.1}
\left\{
\begin{array}{ll}
\dive(y^{1-2s} \nabla U)=0 & \mbox{ in } \R^{N+1}_{+} \\
\frac{\partial U}{\partial \nu^{1-2s}}=-u+k(x) f(u)+h(x)  & \mbox{ on } \R^{N} \\
\end{array}
\right.
\end{equation}
where $k(x)$ is a bounded positive function, $h\in L^{2}(\R^{N})$, $h\geq 0$ ($h\not\equiv 0$) and $f$ satisfies $(f1)$-$(f4)$ with $l<+\infty$. 

\noindent

Since the proof of Theorem \ref{thm1}, consists of several steps, we first collect some useful lemmas.\\
We begin proving the following result:
\begin{lem}\label{lem2.1}
Suppose that $(f1)$-$(f4)$ with $l<+\infty$ hold. Let $h\in L^{2}(\R^{N})$, $k$ satisfies \eqref{1.3}, and $\{U_{n}\}\subset \X$ be a bounded (PS) sequence of $I$. Then $\{U_{n}\}$ has a strongly convergent subsequence in $\X$. 
\end{lem}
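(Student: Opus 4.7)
The plan is to pass to a weak limit, check it solves the PDE, and then upgrade to strong convergence via a Brezis--Lieb decomposition combined with the gap at infinity built into condition $(K)$.

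First I extract a subsequence with $U_n \rightharpoonup U$ in the Hilbert space $\X$. Lemma \ref{DMV} yields $U_n \to U$ in $L^2_{loc}(\R^{N+1}_+, y^{1-2s})$, and together with the continuity of the trace and Theorem \ref{Sembedding} gives $u_n \to u$ in $L^q_{loc}(\R^N)$ for every $q \in [2, 2^*_s)$, with $u_n \to u$ a.e. A standard test of $I'(U_n) \to 0$ against $\Phi \in C^\infty_0(\overline{\R^{N+1}_+})$ shows $I'(U) = 0$: the quadratic pieces pass by weak convergence, and $\int k f(u_n) \Phi(\cdot, 0) \, dx \to \int k f(u) \Phi(\cdot, 0) \, dx$ by local strong convergence of $u_n$ and the growth bound \eqref{2.7}; density then extends the identity to all of $\X$.

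Now set $V_n := U_n - U$. The Hilbert-space expansion $\|U_n\|^2_\X = \|U\|^2_\X + \|V_n\|^2_\X + o(1)$, paired with $\langle I'(U_n), U_n\rangle = o(1)$, $\langle I'(U), U\rangle = 0$, and $\int h(u_n - u) \, dx \to 0$ (from $h \in L^2$ and $v_n \rightharpoonup 0$ in $L^2$), yields
$$
\|V_n\|^2_\X = \int_{\R^N} k(x)\bigl[f(u_n) u_n - f(u) u\bigr] dx + o(1).
$$
Since $H(t) := f(t) t$ satisfies $|H(t)| \le \varepsilon t^2 + C_\varepsilon |t|^{p+1}$ (the analogue of \eqref{2.7} for $f(t) t$) and $\{u_n\}$ is bounded in $L^2 \cap L^{p+1}$ by Theorem \ref{Sembedding}, I apply a generalised Brezis--Lieb identity --- Fatou applied to the elementary splitting $|H(a+b) - H(b) - H(a)| \le \delta(b^2 + |b|^{p+1}) + C_\delta(a^2 + |a|^{p+1})$ --- to obtain $\int k[H(u_n) - H(u) - H(v_n)] \, dx \to 0$, so that
$$
\|V_n\|^2_\X = \int_{\R^N} k(x) f(v_n) v_n \, dx + o(1).
$$

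Finally, condition $(K)$ is exactly what controls the remaining integral. Setting $M := \sup_{t>0} f(t)/t$, $(K)$ provides $\theta := M \sup_{|x|\ge R_0} k(x) < 1$; since $f \equiv 0$ on $(-\infty, 0]$ by $(f1)$, this yields the pointwise inequality $k(x) f(v_n) v_n \le \theta v_n^2$ on $\{|x| \ge R_0\}$. On $B_{R_0}$, local strong convergence $v_n \to 0$ in $L^2_{loc}\cap L^{p+1}_{loc}$ together with \eqref{2.7} gives $\int_{B_{R_0}} k f(v_n) v_n \, dx \to 0$. Combining the two bounds, $(1-\theta) \|V_n\|^2_\X \le o(1)$, and $U_n \to U$ strongly in $\X$. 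The delicate point is the generalised Brezis--Lieb step on the unbounded domain $\R^N$, which must be carried out without any tightness information on $\{u_n\}$; the Fatou-based comparison above sidesteps this by relating $\int k H(u_n)\, dx$ to $\int k H(v_n)\, dx$ rather than directly to $\int k H(u)\, dx$.
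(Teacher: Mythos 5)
Your proof is correct in outline but takes a route genuinely different from the paper's. The paper never invokes a Brezis--Lieb lemma: it first establishes a \emph{uniform tail decay} estimate for $\{U_n\}$ --- testing $I'(U_n)$ against $\Psi_R U_n$ for a cutoff $\Psi_R$ supported outside $B_{R/2}^+$, and using $(K)$ in exactly the same way you do (namely $k(x)f(t)t \le \theta t^2$ with $\theta<1$ on $|x|\ge R_0$) to get $\iint_{\R^{N+1}_+\setminus B_R^+}y^{1-2s}|\nabla U_n|^2 + \int_{\R^N\setminus B_R}u_n^2 \le \varepsilon$ uniformly in $n$. With that tightness in hand, the paper shows $\int k f(u_n)(u_n-u)\to 0$ by splitting at radius $R(\varepsilon)$, using local compactness inside and Cauchy--Schwarz plus the tail bound outside, then concludes $\|U_n\|_\X\to\|U\|_\X$. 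You instead set $V_n=U_n-U$, use the Hilbert-space expansion and $I'(U)=0$ to reduce to $\|V_n\|^2_\X=\int k f(v_n)v_n\,dx+o(1)$ via Brezis--Lieb, and then close with $(K)$. Your route is arguably more streamlined, while the paper's is more self-contained; both exploit $(K)$ through the same pointwise inequality, but the paper converts it into tightness, while you convert it into an absorption estimate $(1-\theta)\|V_n\|^2_\X\le o(1)$.

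The one step you should not call ``elementary'' is the Brezis--Lieb splitting inequality $|H(a+b)-H(b)-H(a)|\le \delta(b^2+|b|^{p+1})+C_\delta(a^2+|a|^{p+1})$ for $H(t)=f(t)t$. The hypotheses $(f1)$--$(f4)$ give no growth control on $f'$ (only on $f$ itself), so one cannot derive it by a mean-value-theorem bound on $H'$. It nonetheless holds, but the correct reason is that $g(t):=f(t)/t$ (extended by $0$ on $(-\infty,0]$) is bounded and uniformly continuous on $\R$, because $g$ is continuous with $g(0^+)=0$ by $(f2)$ and $g(t)\to l<\infty$ as $t\to+\infty$ by $(f4)$; one then writes $H(t)=t^2 g(t)$, uses $s^2\bigl(g(s+t)-g(s)\bigr)$ together with a case analysis on the size of $|t|$ relative to the modulus of uniform continuity, and absorbs the cross term $2M|s||t|$ by Young. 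Since this is precisely where the asymptotic linearity $l<\infty$ enters (for $l=\infty$ the bound $|H(t)|\le Mt^2$ fails and this proof collapses), the verification deserves to be spelled out rather than labelled elementary. Once it is, your argument is complete and correct.
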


\begin{proof}
Firstly, we show that for any $\e>0$, there exist $R(\e)>R_{0}$ (where $R_{0}$ is given by (K)) and $n(\e)>0$ such that 
\begin{equation}\label{2.2}
\iint_{\R^{N+1}_{+} \setminus B_{R}^{+}} y^{1-2s} |\nabla U_{n}|^{2} \, dx dy+ \int_{\R^{N}\setminus B_{R}} u_{n}^{2} \, dx \leq \e, \quad \forall R\geq R(\e) \mbox{ and } n\geq n(\e). 
\end{equation}
Let $\Psi_{R}\in C^{\infty}(\R^{N+1}_{+})$ be a smooth function such that $0\leq \Psi_{R}\leq 1$, 
\begin{equation}\label{2.3}
\Psi_{R}(x, y)= 
\left\{
\begin{array}{ll}
0 \quad & (x, y)\in B^{+}_{\frac{R}{2}}\\
1 \quad &(x, y)\notin B^{+}_{R}. 
\end{array}
\right.
\end{equation}
and
\begin{equation}\label{2.4}
|\nabla \Psi_{R}(x, y)|\leq \frac{C}{R} \, \mbox{ for all } (x, y)\in \R^{N+1}_{+} 
\end{equation}
for some positive constant $C$ independent of $R$.

Then, we can observe that for any $U\in \X$ and all $R\geq 1$, there exists a constant $C_{1}>0$ such that 
$$
\|\Psi_{R} U\|_{\X}\leq C_{1}\|U\|_{\X}.
$$ 
Indeed, by using Young inequality and Lemma \ref{DMV}, we can see that 
\begin{align*}
&\iint_{\R^{N+1}_{+}} y^{1-2s} |\nabla (U \Psi_{R})|^{2} \, dx dy+\int_{\R^{N}} |u \psi_{R}|^{2} \, dx \nonumber \\
&\leq 2\iint_{\R^{N+1}_{+}} y^{1-2s} |\nabla U|^{2}\Psi_{R}^{2} \, dx dy+ 2\iint_{\R^{N+1}_{+}} y^{1-2s} |\nabla \Psi_{R}|^{2}U^{2} \, dx dy+\int_{\R^{N}} |u|^{2} \, dx \nonumber \\
&\leq 2 \iint_{\R^{N+1}_{+}} y^{1-2s} |\nabla U|^{2} \, dx dy +\frac{2C}{R^{2}} \iint_{B^{+}_{R}\setminus B^{+}_{\frac{R}{2}}} y^{1-2s} U^{2} \, dx dy +\int_{\R^{N}} |u|^{2} \, dx \nonumber \\
&\leq 2 \iint_{\R^{N+1}_{+}} y^{1-2s} |\nabla U|^{2} \, dx dy +\int_{\R^{N}} |u|^{2} \, dx+\frac{2C}{R^{2}} \Bigl(\iint_{B^{+}_{R}\setminus B^{+}_{\frac{R}{2}}} y^{1-2s}|\nabla U|^{2\gamma} \, dx dy \Bigr)^{\frac{1}{\gamma}} \Bigl(\iint_{B^{+}_{R}\setminus B^{+}_{\frac{R}{2}}} y^{1-2s} \, dx dy \Bigr)^{\frac{\gamma-1}{\gamma}} \nonumber \\
&\leq 2\left(1+C\right) \|U\|_{\X}^{2}\leq C_{1}\|U\|_{\X}^{2},
\end{align*}
where we used the facts 
$$
\iint_{B^{+}_{R}\setminus B^{+}_{\frac{R}{2}}} y^{1-2s} \, dx dy\leq C R^{N+2-2s} \,\mbox{ and } \, \frac{\gamma-1}{\gamma}=\frac{2}{N+2-2s}.
$$
Since $I'(U_{n})\rightarrow 0$ as $n\rightarrow \infty$ and $\{U_{n}\}$ is bounded in $\X$, we know that, for any $\e>0$, there exists $n(\e)>0$ such that
\begin{equation*}
\langle I'(U_{n}), \Psi_{R} U_{n} \rangle \leq C_{1} \|I'(U_{n})\| \|U_{n}\|_{\X}\leq \frac{\e}{4}, \mbox{ for } n\geq n(\e). 
\end{equation*}
Equivalently, for all $n\geq n(\e)$, we get
\begin{align}\label{2.5}
&\iint_{\R^{N+1}_{+}} y^{1-2s}|\nabla U_{n}|^{2}\Psi_{R} \, dx dy+ \int_{\R^{N}} u_{n}^{2}\psi_{R} \, dx \nonumber \\
&\leq \int_{\R^{N}} (k(x) f(u_{n}) + h(x)) u_{n} \psi_{R} \, dx-\iint_{\R^{N+1}_{+}} y^{1-2s}\nabla U_{n} \nabla \psi_{R} U_{n} \, dx dy + \frac{\e}{4}. 
\end{align}
Now, by using $(f1)$ and \eqref{1.3}, we obtain that there exists $0<\theta <1$ such that
\begin{equation}\label{Khic}
k(x) f(u_{n}) u_{n} \leq \theta u_{n}^{2} \quad \mbox{ for } |x|\geq R_{0}. 
\end{equation}
Since $h\in L^{2}(\R^{N})$ and $\|U_{n}\|_{\X}\leq C$ for some constant $C>0$, it follows from \eqref{2.3} there exists $R(\e)>R_{0}$ such that 
\begin{equation}\label{Hhic}
\int_{\R^{N}} h(x) u_{n} \psi_{R}\, dx \leq |h(x) \psi_{R}|_{2} |u_{n}|_{2} \leq \frac{\e}{4}, \quad \mbox{ for } R\geq R(\e). 
\end{equation}
Due to the boundedness of $\{U_{n}\}$ in $\X$, we may assume, up to a subsequence, that there exists $U\in \X$ such that $U_{n}\rightharpoonup U$ in $\X$, $u_{n}\rightarrow u$ in $L^{q}_{loc}(\R^{N})$ for any $q\in [2, 2^{*}_{s})$ and $u_{n}\rightarrow u$ a.e. in $\R^{N}$.\\
Therefore, \eqref{2.4}, $\|U_{n}\|_{\X}\leq C$, H\"older inequality and Lemma \ref{DMV} yield
\begin{align}\label{2.5WZ}
&\lim_{R\rightarrow \infty}\limsup_{n\rightarrow \infty}\left|\iint_{\R^{N+1}_{+}} y^{1-2s}\nabla U_{n} \nabla \Psi_{R} U_{n} \, dx dy\right| \nonumber \\
& \leq \lim_{R\rightarrow \infty}\limsup_{n\rightarrow \infty}\frac{C}{R} \left(\iint_{B^{+}_{R}\setminus B^{+}_{\frac{R}{2}}} y^{1-2s}|\nabla U_{n}|^{2} \, dx dy \right)^{\frac{1}{2}} 
\left(\iint_{B^{+}_{R}\setminus B^{+}_{\frac{R}{2}}} y^{1-2s}|U_{n}|^{2} \, dx dy \right)^{\frac{1}{2}} \nonumber \\
&\leq \lim_{R\rightarrow \infty} \frac{C}{R}
\left(\iint_{B^{+}_{R}\setminus B^{+}_{\frac{R}{2}}} y^{1-2s}|U|^{2} \, dx dy \right)^{\frac{1}{2}} \nonumber \\
&\leq  \lim_{R\rightarrow \infty} \frac{C}{R} 
\left(\iint_{B^{+}_{R}\setminus B^{+}_{\frac{R}{2}}} y^{1-2s}|U|^{2\gamma} \, dx dy \right)^{\frac{1}{2\gamma}} \left(\iint_{B^{+}_{R}\setminus B^{+}_{\frac{R}{2}}} y^{1-2s} \, dx dy \right)^{\frac{\gamma-1}{2\gamma}}\nonumber \\
&\leq C \lim_{R\rightarrow \infty} \left(\iint_{B^{+}_{R}\setminus B^{+}_{\frac{R}{2}}} y^{1-2s}|U|^{2\gamma} \, dx dy \right)^{\frac{1}{2\gamma}} =0.
\end{align}
Then, putting together \eqref{2.5}, \eqref{Khic}, \eqref{Hhic} and \eqref{2.5WZ},  we have for any $R\geq R(\e)$ and $n\geq n(\e)$ sufficiently large
\begin{equation}\label{Eval}
\iint_{\R^{N+1}_{+}} y^{1-2s}|\nabla U_{n}|^{2} \Psi_{R}\, dx dy + \int_{\R^{N}} (1-\theta)u_{n}^{2}\psi_{R} \, dx \leq \e. 
\end{equation}
From $\theta\in (0, 1)$ and \eqref{2.3}, we can deduce that \eqref{Eval} implies \eqref{2.2}.\\
Now, we exploit the relation (\ref{2.2}) in order to prove the existence of a convergent subsequence for $\{U_{n}\}$.
By using the fact that $I'(U_{n})=0$ and $\{U_{n}\}$ is bounded in $\X$, we can see that 
\begin{equation}\label{AH1}
\langle I'(U_{n}), U_{n}\rangle=\iint_{\R^{N+1}_{+}} y^{1-2s}|\nabla U_{n}|^{2} \, dx dy+\int_{\R^{N}}u^{2}_{n} \, dx-\int_{\R^{N}} k(x) f(u_{n}) u_{n}\, dx-\int_{\R^{N}} h(x)u_{n}\, dx=o(1)
\end{equation}
and
\begin{equation}\label{AH2}
\langle I'(U_{n}), U\rangle=\iint_{\R^{N+1}_{+}} y^{1-2s}\nabla U_{n} \nabla U\, dx dy+\int_{\R^{N}}u_{n} u\, dx-\int_{\R^{N}} k(x) f(u_{n}) u\, dx-\int_{\R^{N}} h(x)u\, dx=o(1).
\end{equation}
Hence, in order to prove our Lemma, it is enough to prove that $\|U_{n}\|_{\X}\rightarrow \|U\|_{\X}$ as $n\rightarrow \infty$. In view of (\ref{AH1}) and (\ref{AH2}), this is equivalent to show that 
\begin{equation}\label{AH3}
\int_{\R^{N}} k(x) f(u_{n})(u_{n}-u)\, dx+\int_{\R^{N}} h(x) (u_{n}-u)\, dx=o(1).
\end{equation}
Clearly, by using the facts $k\in L^{\infty}(\R^{N})$, $h\in L^{2}(\R^{N})$ and $u_{n}\rightarrow u$ in $L^{2}(B_{R})$ for any $R>0$, we can see that 
\begin{equation}\label{AH4}
\int_{B_{R}} k(x) f(u_{n})(u_{n}-u)\, dx+\int_{B_{R}} h(x) (u_{n}-u)\, dx=o(1).
\end{equation}
On the other hand, by using (\ref{2.2}), we know that for any $\e>0$ there exists $R(\e)>0$ such that
\begin{align}\label{AH5} 
&\int_{|x|\geq R(\e)} k(x) f(u_{n})(u_{n}-u)\, dx+\int_{\R^{N}} h(x) (u_{n}-u)\, dx \nonumber \\
&\leq \left(\int_{|x|\geq R(\e)} k(x) |f(u_{n})|^{2}\, dx \right)^{\frac{1}{2}}\left(\int_{|x|\geq R(\e)} k(x) |u_{n}-u|^{2}\, dx \right)^{\frac{1}{2}} \nonumber \\
&+\left(\int_{|x|\geq R(\e)} |h(x)|^{2} \, dx \right)^{\frac{1}{2}}\left(\int_{|x|\geq R(\e)} |u_{n}-u|^{2}\, dx \right)^{\frac{1}{2}} \nonumber \\
&\leq C \left(\int_{|x|\geq R(\e)} |u_{n}|^{2}\, dx \right)^{\frac{1}{2}} \left(\int_{|x|\geq R(\e)} |u_{n}-u|^{2}\, dx \right)^{\frac{1}{2}}+|h|_{2}\left(\int_{|x|\geq R(\e)} |u_{n}-u|^{2}\, dx \right)^{\frac{1}{2}} \nonumber \\
&\leq C\e
\end{align}
for $n$ large enough.
By combining (\ref{AH4}) and (\ref{AH5}) we obtain (\ref{AH3}). This concludes the proof of lemma.

\end{proof}

\noindent
In the next Lemma we show that  $I$ is positive on the boundary of some ball in $\X$, provided that $|h|_{2}$ is sufficiently small.
This property will be fundamental to apply Ekeland's variational principle. 

\begin{lem}\label{lem2.2}
Let us assume that $(f1)$-$(f3)$ hold, $h\in L^{2}(\R^{N})$ such that (\ref{HV}) is satisfied, and $k\in L^{\infty}(\R^{N})$. Then there exist $\rho, \alpha, m>0$ such that $I(U)|_{\|U\|= \rho}\geq \alpha >0$ for $|h|_{2}<m$. 
\end{lem}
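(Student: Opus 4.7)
\smallskip
\noindent
\textbf{Proof plan.} The strategy is the standard mountain-pass geometry estimate applied to the extended functional. I will bound the nonlinear energy term via the growth estimate (\ref{2.7}), estimate the linear perturbation by Cauchy--Schwarz, and compare the result with a scalar polynomial in $\|U\|_{\X}$ whose positive maximum is exactly $m$.

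\smallskip
\noindent
First I would use (\ref{2.7}) together with $k\in L^{\infty}(\R^{N})$ to write
\begin{equation*}
\int_{\R^{N}} k(x) F(u) \, dx \leq \frac{\varepsilon}{2}|k|_{L^{\infty}(\R^{N})} |u|_{2}^{2} + \frac{C_{\varepsilon}}{p+1}|k|_{L^{\infty}(\R^{N})} |u|_{p+1}^{p+1}.
\end{equation*}
The term $|u|_{2}^{2}$ is controlled by $\|U\|_{\X}^{2}$ by the very definition of the norm. For the $L^{p+1}$-term, since $p+1\in (2, 2^{*}_{s})$, interpolation between $L^{2}(\R^{N})$ and $L^{2^{*}_{s}}(\R^{N})$, combined with the trace/Sobolev inequality (\ref{Mia}) and Theorem \ref{Sembedding}, yields a constant (which I will denote $S_{*}$, consistently with (\ref{HV})) so that $|u|_{p+1}\leq S_{*}\|U\|_{\X}$. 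For the forcing term I would simply apply Cauchy--Schwarz: $\int_{\R^{N}} h(x)u\,dx \leq |h|_{2}\,|u|_{2}\leq |h|_{2}\,\|U\|_{\X}$.

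\smallskip
\noindent
Putting these ingredients together, I obtain
\begin{equation*}
I(U)\geq \|U\|_{\X}\Bigl[\Bigl(\tfrac{1}{2}-\tfrac{\varepsilon}{2}|k|_{L^{\infty}(\R^{N})}\Bigr)\|U\|_{\X}-\tfrac{C_{\varepsilon}}{p+1}S_{*}^{p+1}|k|_{L^{\infty}(\R^{N})}\|U\|_{\X}^{p}\Bigr]-|h|_{2}\|U\|_{\X}.
\end{equation*}
Defining $g(t):=(\tfrac{1}{2}-\tfrac{\varepsilon}{2}|k|_{L^{\infty}(\R^{N})})t-\tfrac{C_{\varepsilon}}{p+1}S_{*}^{p+1}|k|_{L^{\infty}(\R^{N})}t^{p}$, the hypothesis $\varepsilon\in(0,|k|^{-1}_{L^{\infty}(\R^{N})})$ ensures the linear coefficient is strictly positive, and since $p>1$ the function $g$ attains a strictly positive maximum $m$ at a unique radius $\rho>0$; but $m$ is exactly the quantity in (\ref{HV}). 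Choosing $\|U\|_{\X}=\rho$ and $\alpha:=\rho(m-|h|_{2})$, the condition $|h|_{2}<m$ yields $I(U)\geq\alpha>0$, as claimed.

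\smallskip
\noindent
The only delicate bookkeeping is the identification of the Sobolev constant $S_{*}$: strictly, Theorem \ref{Sembedding} is stated for the critical exponent $2_{s}^{*}$, while the estimate needs the subcritical embedding into $L^{p+1}$. I do not expect any genuine difficulty here — one either interpolates or uses the continuous embedding $H^{s}(\R^{N})\hookrightarrow L^{p+1}(\R^{N})$ directly — but the resulting constant must be absorbed into the symbol $S_{*}$ to match the formula (\ref{HV}) exactly.
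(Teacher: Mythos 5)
Your proposal is correct and follows essentially the same route as the paper's proof: fix $\varepsilon\in(0,|k|^{-1}_{L^\infty})$, use the growth estimate \eqref{2.7} together with the trace/Sobolev embeddings and Cauchy--Schwarz to obtain the lower bound $I(U)\geq \|U\|\bigl[(\tfrac{1}{2}-\tfrac{\varepsilon}{2}|k|_{L^\infty})\|U\|-\tfrac{C_\varepsilon}{p+1}S_*^{p+1}|k|_{L^\infty}\|U\|^{p}-|h|_2\bigr]$, and then read off $\rho$ and $\alpha$ from the positive maximum $m$ of the bracketed polynomial. Your closing remark about the constant $S_*$ (critical versus subcritical embedding) is a fair observation about a notational abuse that the paper itself makes silently; it does not affect the validity of either argument.
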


\begin{proof}
Fix $\varepsilon\in (0, |k|^{-1}_{L^{\infty}(\R^{N})})$. Then, in view of  Theorem \ref{FSE} and (\ref{2.7}), we get
\begin{align}\begin{split}\label{2.8}
I(U)&\geq \frac{1}{2} \|U\|^{2} - \frac{\e}{2}|k|_{L^{\infty}(\R^{N})} \|U\|^{2} - \frac{C(\e)}{p+1}|k|_{L^{\infty}(\R^{N})} S_{*}^{p+1} \|U\|^{p+1} - |h|_{2} \|U\|\\
&= \|U\| \left[ \left( \frac{1}{2} - C_{1}\e\right) \|U\| - C_{2}(\e) \|U\|^{p} - |h|_{2} \right],  
\end{split}\end{align}
where
$$
C_{1}:=\frac{1}{2}|k|_{L^{\infty}(\R^{N})}  \mbox{ and } C_{2}(\varepsilon):=\frac{C(\e)}{p+1}|k|_{L^{\infty}(\R^{N})} S_{*}^{p+1}.
$$
By using (\ref{HV}) and (\ref{2.8}), we can infer that there exist $\rho, \alpha>0$ such that $I(U)|_{\|U\|=\rho}\geq \alpha$ provided that $|h|_{2}<m$.

\end{proof}

For $\rho$ given by Lemma \ref{lem2.2}, we denote $B_{\rho}= \{U \in \X: \|U\|<\rho \}$ the ball in $\X$ with center in $0$ and radius $\rho$. 
As a consequence of Ekeland's variational principle and Lemma \ref{lem2.1}, we can see that $I$ has a local minimum if $|h|_{2}$ is  small enough.

\begin{thm}\label{thm2.1}
Assume that $(f1)$-$(f4)$ with $l<+\infty$ hold, $h\in L^{2}(\R^{N})$, $h\geq 0$ ($h\not\equiv 0$) and $k$ satisfies \eqref{1.3}. If $|h|_{2}<m$, $m$ is given by Lemma $\ref{lem2.2}$, then there exists $U_{0}\in \X$ such that
\begin{equation*}
I(U_{0})= \inf \{I(U) : U\in \overline{B}_{\rho} \}<0,
\end{equation*}
and $U_{0}$ is a positive solution of problem \eqref{2.1}. 
\end{thm}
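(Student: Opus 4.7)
The plan is to apply Ekeland's variational principle on the closed ball $\overline{B}_{\rho} \subset \X$, where $\rho$ is the radius produced by Lemma~\ref{lem2.2}. The first task is to exhibit a point in $B_{\rho}$ where $I$ is strictly negative, so that $c_{0}:=\inf_{U\in \overline{B}_{\rho}} I(U) < 0 < \alpha \leq \inf_{\partial B_{\rho}} I$. Since $h\geq 0$ and $h\not\equiv 0$, the set $\{h>0\}$ has positive measure, so one can fix $\varphi\in C_{0}^{\infty}(\R^{N})$ with $\varphi\geq 0$ and $\int_{\R^{N}} h\varphi\,dx>0$; let $\Phi\in \X$ be its $s$-harmonic extension. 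From $(f1)$--$(f2)$ one has $F(t)=o(t^{2})$ as $t\to 0^{+}$ and $F(t)=0$ for $t\leq 0$, hence
\[
I(t\Phi) = \frac{t^{2}}{2}\|\Phi\|_{\X}^{2} - \int_{\R^{N}} k(x)\, F(t\varphi)\,dx - t\int_{\R^{N}} h\varphi\,dx = -t\int_{\R^{N}} h\varphi\,dx + O(t^{2})
\]
as $t\to 0^{+}$, which is negative for $t>0$ small; choosing $t$ also small enough that $\|t\Phi\|_{\X}<\rho$ yields $c_{0}<0$.

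Next, Ekeland's variational principle on the complete metric space $\overline{B}_{\rho}$ produces a minimizing sequence $\{U_{n}\}\subset \overline{B}_{\rho}$ with $I(U_{n})\to c_{0}$ and $I'(U_{n})\to 0$ in $\X^{*}$. The strict gap $c_{0}<\alpha$, combined with the lower bound $I\geq \alpha$ on $\partial B_{\rho}$ from Lemma~\ref{lem2.2}, forces $\|U_{n}\|_{\X}<\rho$ for all $n$ large, so $\{U_{n}\}$ is a genuine bounded Palais--Smale sequence for the unrestricted functional $I$. Lemma~\ref{lem2.1} then yields a subsequence converging strongly to some $U_{0}\in \overline{B}_{\rho}$, and passing to the limit gives $I(U_{0})=c_{0}<0$ and $I'(U_{0})=0$, so $U_{0}$ is a nontrivial weak solution of~(\ref{2.1}).

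For positivity, decompose $U_{0}=U_{0}^{+}-U_{0}^{-}$ pointwise on $\R^{N+1}_{+}$ with $U_{0}^{\pm}=\max\{\pm U_{0},0\}\in \X$; the trace of $U_{0}^{-}$ coincides with $u_{0}^{-}$ a.e., $\nabla U_{0}^{+}\cdot \nabla U_{0}^{-}=0$ a.e., and $u_{0}^{+}u_{0}^{-}=0$ a.e.\ on $\R^{N}$. Testing $I'(U_{0})=0$ against $-U_{0}^{-}\in \X$ and using $(f1)$ (so that $f(u_{0})\,u_{0}^{-}\equiv 0$) together with $h\geq 0$, one computes
\[
0 = \langle I'(U_{0}),-U_{0}^{-}\rangle = \iint_{\R^{N+1}_{+}} y^{1-2s}|\nabla U_{0}^{-}|^{2}\,dxdy + \int_{\R^{N}} (u_{0}^{-})^{2}\,dx + \int_{\R^{N}} h\, u_{0}^{-}\,dx,
\]
and every term on the right is nonnegative, forcing $U_{0}^{-}\equiv 0$ and hence $u_{0}\geq 0$. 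Because $I(U_{0})<0$ we have $U_{0}\not\equiv 0$; since $u_{0}$ satisfies $(-\Delta)^{s}u_{0}+u_{0}=k(x)f(u_{0})+h(x)\geq h\not\equiv 0$ in $\R^{N}$, the strong maximum principle for $(-\Delta)^{s}+I$ (equivalently, the positivity of the Bessel-type kernel $\mathcal{G}_{s}$ in the representation $u_{0}=\mathcal{G}_{s}*(kf(u_{0})+h)\geq \mathcal{G}_{s}*h>0$) upgrades $u_{0}\geq 0$ to $u_{0}>0$ in $\R^{N}$, and consequently $U_{0}>0$ in $\R^{N+1}_{+}$.

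The main obstacle I foresee is ensuring that the Ekeland minimizing sequence stays in the open ball $B_{\rho}$, which is precisely where the quantitative gap between $c_{0}<0$ and $\inf_{\partial B_{\rho}} I\geq \alpha>0$ supplied by Lemma~\ref{lem2.2} is indispensable; once this is secured, Lemma~\ref{lem2.1} handles compactness without further effort. The positivity computation is routine provided one justifies $U_{0}^{\pm}\in \X$ and the orthogonality identity for the weighted gradients, both of which belong to classical Muckenhoupt-weighted Sobolev theory on $\R^{N+1}_{+}$.
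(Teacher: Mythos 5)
Your proposal follows the same route as the paper's proof: produce a direction in which $I$ is strictly negative near the origin (the paper picks any $V\in\X$ with $\int h\,v>0$ and uses $F\geq 0$; you pick a nonnegative test function and use $F(t)=o(t^2)$—both work), apply Ekeland's variational principle on $\overline{B}_\rho$, use the strict gap from Lemma~\ref{lem2.2} to force the minimizing sequence into the open ball (hence a genuine bounded PS sequence), and invoke Lemma~\ref{lem2.1} for strong convergence to a critical point with $I(U_0)=c_0<0$. One small presentational caveat: Ekeland's principle by itself yields only the approximate-minimization inequality; the conclusion $I'(U_n)\to 0$ requires first knowing $\|U_n\|<\rho$ so that two-sided perturbations stay in the ball. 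You state the derivative estimate before the interiority argument, which inverts the correct logical order (the paper does it in the right sequence), but since you do supply the gap argument, this is a matter of exposition rather than substance. Finally, you add an explicit positivity argument (testing against $-U_0^-$ using $(f1)$ and $h\geq0$, then the positivity of the Bessel-type Green's kernel of $(-\Delta)^s+1$) that the paper leaves implicit at this stage—the paper only appeals to Harnack later, in the proof of Theorem~\ref{thm1}. Your positivity computation is correct and is a worthwhile addition.
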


\begin{proof}
Since $h(x)\in L^{2}(\R^{N})$, $h\geq 0$ and $h\not\equiv 0$, we can choose a function $V \in \X$ such that
\begin{equation}\label{2.9}
\int_{\R^{N}} h(x) v(x)\, dx>0. 
\end{equation}
For all $t>0$, we can note that 
\begin{align*}
I(tV)&= \frac{t^{2}}{2} \left[\int_{\R^{N+1}_{+}} y^{1-2s} |\nabla V|^{2}\, dx dy +\int_{\R^{N}} v^{2} \, dx \right] - \int_{\R^{N}} k(x) F(t v) \, dx - t\int_{\R^{N}} h(x) v(x)\, dx \\
&\leq  \frac{t^{2}}{2} \|V\|^{2} - t\int_{\R^{N}} h(x) v(x)\, dx<0 \, \mbox{ for } t>0 \mbox{ small enough. }
\end{align*}
Then
$$
c_{0}:= \inf\{ I(U): U\in \overline{B}_{\rho}\}<0.
$$ 
By applying the Ekeland's variational principle, we know that there exists $\{U_{n}\}\subset \overline{B}_{\rho}$ such that
\begin{compactenum}[$(i)$]
\item $c_{0}\leq I(U_{n})<c_{0} +\frac{1}{n}$, 
\item $I(W)\geq I(U_{n}) - \frac{1}{n} \|W-U_{n}\|$ for all $W\in \overline{B}_{\rho}$. 
\end{compactenum}
Now, our claim is to prove that $\{U_{n}\}$ is a bounded (PS) sequence of $I$. 

Firstly, we show that $\|U_{n}\|<\rho$ for a $n$ large enough. If $\|U_{n}\|=\rho$ for infinitely many $n$, then we may assume that $\|U_{n}\|=\rho$ for all $n\geq 1$. Hence, by Lemma \ref{lem2.2}, we can see that $I(U_{n})\geq \alpha>0$. Taking the limit as $n\rightarrow \infty$ and by using $(i)$, we can deduce that $0>c_{0}\geq \alpha>0$, which is a contradiction. 

Now, we show that $I'(U_{n})\rightarrow 0$. Indeed, for any $U\in \X$ with $\|U\|=1$, let $W_{n}=U_{n}+t U$. For a fixed $n$, we have $\|W_{n}\|\leq \|U_{n}\|+t<\rho$ when $t$ is small enough. By using $(ii)$, we deduce that 
$$
I(W_{n})\geq I(U_{n})-\frac{t}{n}\|U\|,
$$ 
that is 
$$
\frac{I(W_{n})-I(U_{n})}{t}\geq -\frac{\|U\|}{n}=-\frac{1}{n}.
$$ 
Taking the limit as $t\rightarrow 0$, we deduce that $\langle I'(U_{n}), U\rangle\geq -\frac{1}{n}$, which means $|\langle I'(U_{n}), U\rangle|\leq \frac{1}{n}$ for any $U\in \X$ with $\|U\|=1$.
This shows that $\{U_{n}\}$ is a bounded (PS) sequence of $I$.
Then, by using Lemma \ref{lem2.1}, we can see that there exists $U_{0}\in \X$ such that $I'(U_{0})=0$ and $I(U_{0})=c_{0}<0$. 

\end{proof}

\noindent
In what follows, we show that problem \eqref{2.1} has a mountain pass type solution. In order to do this, we use the following variant of version of Mountain Pass Theorem which allows us to find a so-called Cerami sequence $\{U_{n}\}$. 
Since this type of Palais-Smale sequence enjoys of some useful properties, we are able to prove its boundedness in the asymptotically linear case.
\begin{thm}\cite{Ek}\label{Ek}
Let $X$ be a real Banach space with its dual $X^{*}$, and suppose that $I\in C^{1}(X, \R)$ satisfies
$$
\max\{I(0), I(e)\}\leq \mu<\alpha\leq \inf_{\|x\|=\rho} I(x),
$$
for some $\mu<\alpha$, $\rho>0$ and $e\in X$ with $\|e\|>\rho$. Let $c\geq \alpha$ be characterized by
$$
c=\inf_{\gamma\in \Gamma}\max_{t\in [0, 1]} I(\gamma(t)),
$$
where
$$
\Gamma=\{\gamma\in C([0, 1], X): \gamma(0)=0, \gamma(1)=e\}.
$$
Then, there exists a Cerami sequence $\{x_{n}\}\subset X$ at the level $c$ that is
$$
I(x_{n})\rightarrow c \mbox{ and } (1+\|x_{n}\|)\|I'(x_{n})\|_{*}\rightarrow 0
$$ 
as $n\rightarrow \infty$.
\end{thm}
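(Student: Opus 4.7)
The plan is to follow the classical two-step scheme for a weighted (Cerami-type) Mountain Pass Theorem: either argue by contradiction via a quantitative deformation lemma adapted to the weighted slope $(1+\|x\|)\|I'(x)\|_{*}$, or (more self-contained) apply Ekeland's variational principle directly on the space of admissible paths $\Gamma$. I would favor the Ekeland route.

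First I would equip $\Gamma$ with the sup-distance $d(\gamma_{1},\gamma_{2})=\max_{t\in[0,1]}\|\gamma_{1}(t)-\gamma_{2}(t)\|$, under which $\Gamma$ is a complete metric space, and define $\Phi:\Gamma\to\R$ by $\Phi(\gamma)=\max_{t\in[0,1]}I(\gamma(t))$. Then $\Phi$ is continuous (since $I\in C^{1}$), bounded below, and $\inf_{\Gamma}\Phi=c$. Choosing almost-minimizing paths $\gamma_{n}$ with $\Phi(\gamma_{n})\leq c+1/n^{2}$, Ekeland's principle produces $\tilde{\gamma}_{n}\in\Gamma$ satisfying $\Phi(\tilde{\gamma}_{n})\leq\Phi(\gamma_{n})$, $d(\tilde{\gamma}_{n},\gamma_{n})\leq 1/n$, and the slope bound $\Phi(\zeta)\geq\Phi(\tilde{\gamma}_{n})-(1/n)\,d(\zeta,\tilde{\gamma}_{n})$ for every $\zeta\in\Gamma$. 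Since $\max\{I(0),I(e)\}\leq\mu<\alpha\leq c$, for $n$ large the set $M_{n}=\{t\in[0,1]:I(\tilde{\gamma}_{n}(t))=\Phi(\tilde{\gamma}_{n})\}$ is a compact subset of $(0,1)$.

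The crucial step is to convert the slope inequality on $\Gamma$ into the Cerami information at a point $x_{n}=\tilde{\gamma}_{n}(t_{n})$ with $t_{n}\in M_{n}$. For a unit vector $v\in X$, I would test $\Phi$ against the competing path $\zeta(t)=\tilde{\gamma}_{n}(t)+\varphi(t)\,(1+\|\tilde{\gamma}_{n}(t)\|)\,v$, where $\varphi\in C_{c}^{\infty}(0,1)$ is a small nonnegative bump localized near $t_{n}$. The weight $1+\|\tilde{\gamma}_{n}(t)\|$ is precisely what produces the factor $(1+\|x_{n}\|)$ on the derivative: a standard minimax computation (compactness of $M_{n}$, continuity of $I'$, and letting the support of $\varphi$ shrink to $t_{n}$) then gives $(1+\|x_{n}\|)\,\langle I'(x_{n}),v\rangle\geq -1/n$ for every such $v$, hence $(1+\|x_{n}\|)\,\|I'(x_{n})\|_{*}\to 0$, while $I(x_{n})\to c$ follows at once from $\Phi(\tilde{\gamma}_{n})\to c$.

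The main obstacle is exactly this reweighting trick. Plain perturbations $\tilde{\gamma}_{n}+\varphi v$ would yield only the classical Palais--Smale control $\|I'(x_{n})\|_{*}\to 0$, not the extra factor $1+\|x_{n}\|$; the $(1+\|\tilde{\gamma}_{n}(t)\|)$-scaled perturbation is the one place where the Cerami version diverges from the usual mountain pass proof, and one has to verify that the perturbed curve still lies in $\Gamma$ (endpoints unchanged, continuity preserved) and that the pairing with $I'$ passes cleanly to the limit as the bump shrinks. Equivalently, the deformation-lemma route would replace the standard pseudo-gradient flow $\dot{\eta}=-W(\eta)$ by the weighted flow $\dot{\eta}=-(1+\|\eta\|)\,W(\eta)$: the $(1+\|x\|)$ factor is precisely what guarantees global-in-time existence of the flow via a Gronwall-type estimate, while still pushing a near-optimal path below level $c$, in contradiction with the definition of the minimax value.
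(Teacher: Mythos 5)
The paper does not prove this theorem: it is cited directly from Ekeland's book \cite{Ek}, so there is no internal proof to compare against. Your Ekeland-on-path-space route, with the perturbation reweighted by the factor $1+\|\tilde\gamma_n(t)\|$, is indeed the standard way to obtain the Cerami refinement and matches the spirit of the cited source. One technical point deserves more care than your sketch gives it: the inequality $(1+\|x_n\|)\langle I'(x_n),v\rangle\geq -1/n$ cannot be extracted for a \emph{single} $x_n$ and \emph{all} unit $v$ by testing a single bump, because the $t$ at which $\max_t I(\zeta(t))$ is attained moves with the perturbation. The correct mechanism is a contradiction argument over the compact set $M_n$: suppose $(1+\|\tilde\gamma_n(t)\|)\,\|I'(\tilde\gamma_n(t))\|_*\geq\varepsilon_0>0$ on a neighborhood of $M_n$, build a continuous (weighted) pseudo-gradient field $V$ there with $\|V(x)\|\leq 1+\|x\|$ and $\langle I'(x),V(x)\rangle\geq \tfrac{1}{2}(1+\|x\|)\|I'(x)\|_*$, push $\tilde\gamma_n$ along $-V$ by a small cut-off time, and show the resulting admissible path lowers $\Phi$ by more than $\tfrac{1}{n}$ times the sup-distance moved, violating Ekeland's slope inequality. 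This yields, for each $n$, some $t_n$ near $M_n$ with $(1+\|\tilde\gamma_n(t_n)\|)\|I'(\tilde\gamma_n(t_n))\|_*\to 0$ and $I(\tilde\gamma_n(t_n))\to c$. You have correctly identified the key weighting idea and flagged the deformation-lemma alternative, which is in fact the mechanism that makes your sketch rigorous; the single-direction perturbation by itself does not close the argument.
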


\noindent
The below lemma, shows that $I$ possesses a mountain pass geometry. 
\begin{lem}\label{lem2.3}
Suppose that $(f1)$- $(f4)$ hold and $\mu^{*}\in (l, +\infty)$ with $\mu^{*}$ given by \eqref{1.4}. Then there exists $V\in \X$ with $\|V\|>\rho$, $\rho$ is given by Lemma $\ref{lem2.2}$, such that $I(V)<0$.  
\end{lem}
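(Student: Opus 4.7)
The plan is to build a direction $U \in \X$ along which $I(tU) \to -\infty$ as $t \to +\infty$, using the assumption that $l$ strictly exceeds $\mu^{*}$. By the definition \eqref{1.4} of $\mu^{*}$ combined with $\mu^{*} < l$, I first choose $w \in H^{s}(\R^{N})$ with $\int_{\R^{N}} k(x) w^{2}\, dx = 1$ and $\|w\|_{H^{s}(\R^{N})}^{2} < l$. Replacing $w$ by $|w|$ does not increase the numerator (since $[\,|w|\,] \leq [w]$, following from the pointwise inequality $\bigl||a|-|b|\bigr| \leq |a-b|$) and preserves the denominator, so I may assume $w \geq 0$. Mollification and truncation then yield $u \in C^{\infty}_{c}(\R^{N})$ with $u \geq 0$, $u \not\equiv 0$, and
$$\|u\|_{H^{s}(\R^{N})}^{2} < (l - \delta) \int_{\R^{N}} k(x) u^{2}\, dx$$
for some $\delta > 0$. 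Let $U$ be the $s$-harmonic extension of $u$, so that $U \geq 0$ on $\R^{N+1}_{+}$ and, dropping $\kappa_{s}$ per the remark, $\|U\|_{\X}^{2} = \|u\|_{H^{s}(\R^{N})}^{2}$.

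Fix $\varepsilon \in (0, \delta)$. By $(f4)$ with $l < \infty$, there exists $T_{\varepsilon} > 0$ with $f(\tau) \geq (l - \varepsilon)\tau$ for $\tau \geq T_{\varepsilon}$; integrating and using $F \geq 0$ on $[0, \infty)$ (which follows from $(f1)$) gives
$$F(\tau) \geq \frac{l - \varepsilon}{2}\tau^{2} - C_{\varepsilon}, \qquad \tau \geq 0,$$
with $C_{\varepsilon} = \frac{l - \varepsilon}{2} T_{\varepsilon}^{2}$. Since $\supp u \subset B_{R}$ for some $R > 0$ and $u \geq 0$, evaluating at $\tau = tu(x)$ and integrating yields
$$\int_{\R^{N}} k(x) F(tu)\, dx \geq \frac{(l - \varepsilon) t^{2}}{2} \int_{\R^{N}} k(x) u^{2}\, dx - C_{\varepsilon} |k|_{L^{\infty}(\R^{N})} |B_{R}|.$$
Since $h, u \geq 0$, the linear term $-t\int_{\R^{N}} h(x) u\, dx$ is nonpositive and may be dropped from the upper bound of $I$. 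Combining these estimates produces
$$I(tU) \leq \frac{t^{2}}{2}\Bigl(\|u\|_{H^{s}(\R^{N})}^{2} - (l - \varepsilon) \int_{\R^{N}} k(x) u^{2}\, dx\Bigr) + C_{\varepsilon} |k|_{L^{\infty}(\R^{N})} |B_{R}|.$$

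Because $\varepsilon < \delta$, the coefficient of $t^{2}/2$ is strictly negative, so $I(tU) \to -\infty$ as $t \to +\infty$. It therefore suffices to take $t_{0}$ large enough that $t_{0} \|U\|_{\X} > \rho$ and $I(t_{0}U) < 0$, and set $V := t_{0} U$. The main delicate point is handling the error constant $C_{\varepsilon}$ in the pointwise lower bound for $F$: for a general $u \in H^{s}(\R^{N})$ the corresponding estimate would produce a term like $C_{\varepsilon}\int_{\{u>0\}} k(x)\, dx$ which need not be finite, since $k \in L^{\infty}(\R^{N})$ is not assumed integrable. Selecting a compactly supported $u$ at the outset sidesteps this issue while still witnessing the strict inequality $\|u\|_{H^{s}(\R^{N})}^{2} < l \int k u^{2}\, dx$ that the hypothesis $\mu^{*} < l$ provides.
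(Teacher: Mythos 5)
Your proof is correct, but it takes a slightly different route from the paper's. The paper, having chosen a nonnegative $W\in\X$ with $\int_{\R^N}k\,w^2\,dx=1$ and $\|W\|^2<l$, simply notes that $k(x)F(tw(x))/t^2\geq 0$ converges pointwise a.e.\ to $\tfrac{l}{2}k(x)w(x)^2$ (by $(f4)$ and L'H\^opital), and applies Fatou's lemma directly to conclude $\limsup_{t\to\infty}I(tW)/t^{2}\leq\tfrac{1}{2}(\|W\|^{2}-l)<0$. You instead extract an $x$-independent lower bound $F(\tau)\geq\tfrac{l-\varepsilon}{2}\tau^{2}-C_{\varepsilon}$ and must pass to a compactly supported $u$ so that the integrated error term $C_{\varepsilon}\int_{\{u>0\}}k\,dx$ is finite. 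The Fatou route avoids the mollification/truncation step and the integrability concern entirely, since the nonnegativity of the integrand is all that is needed; your explicit-constant route is more elementary (no measure-theoretic limit theorem is invoked) at the price of an extra density argument. Both yield the desired $V=t_{0}U$ with $I(V)<0$ and $\|V\|>\rho$.
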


\begin{proof}
Being $l>\mu^{*}$, we can find a nonnegative function $W\in \X$, such that
\begin{equation*}
\int_{\R^{N}} k(x)w^{2} \, dx =1 \mbox{ such that } \int_{\R^{N+1}_{+}} y^{1-2s}|\nabla W|^{2} \, dx dy + \int_{\R^{N}} w^{2} \, dx <l. 
\end{equation*}
By using $(f4)$ and Fatou's lemma, we can see that
\begin{equation*}
\lim_{t\rightarrow \infty}\frac{I(tW)}{t^{2}}= \frac{1}{2}\|W\|^{2} - \lim_{t\rightarrow \infty} \int_{\R^{N}} k(x) \frac{F(tw)}{t^{2}}\, dx - \lim_{t\rightarrow \infty} \frac{1}{t}\int_{\R^{N}} h(x)w(x) \, dx \leq \frac{1}{2} (\|W\|^{2} -l)<0
\end{equation*}
Then, we take $V=t_{0}W$ with $t_{0}$ large enough. 

\end{proof}

\noindent
Putting together Lemmas \ref{lem2.2} and Lemma \ref{lem2.3}, we can see that the assumptions of Theorem \ref{Ek} are satisfied. Then, we can find a sequence $\{U_{n}\}\subset \X$ with the following property
\begin{equation}\label{2.10}
I(U_{n})\rightarrow c>0 \mbox{ and } \|I'(U_{n})\|(1+ \|U_{n}\|)\rightarrow 0,
\end{equation}
Let
\begin{equation*}
W_{n}=\frac{U_{n}}{\|U_{n}\|}. 
\end{equation*}
Obviously, $\{W_{n}\}$ is bounded in $\X$, so there exists a $W\in \X$ such that, up to a subsequence, we have
\begin{align}\begin{split}\label{2.11}
&W_{n}\rightharpoonup W \mbox{ in } \X, \\
&w_{n}\rightarrow w \mbox{ a.e. in } \R^{N}, \\
&w_{n}\rightarrow w \mbox{ strongly in } L^{2}_{loc}(\R^{N}).  
\end{split}\end{align}

\noindent
With the notation above introduced, we prove the following lemma. 
\begin{lem}\label{lem2.4}
Assume that $(f1)$-$(f4)$ and $(K)$ hold. Let $h\in L^{2}(\R^{N})$ and $\mu^{*}\in (l, +\infty)$ for $\mu^{*}$ given by \eqref{1.4}. If $\|U_{n}\|\rightarrow \infty$, then $W$ given by \eqref{2.11} is a nontrivial nonnegative solution of 
\begin{equation}\label{2.12}
\left\{
\begin{array}{ll}
\dive(y^{1-2s} \nabla W)=0 & \mbox{ in } \R^{N+1}_{+} \\
\frac{\partial W}{\partial \nu^{1-2s}}=-w+ l k(x) w  & \mbox{ on } \R^{N} \\
\end{array}
\right.  
\end{equation} 
\end{lem}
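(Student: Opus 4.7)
The plan is to verify three properties of the weak limit $W$: $W\geq 0$ a.e., $W$ weakly solves \eqref{2.12}, and $W\not\equiv 0$.

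I would start with nonnegativity by testing \eqref{2.10} against $U_n^-$. Hypothesis $(f1)$ gives $f(u_n) u_n^-\equiv 0$, and the chain-rule identities $\iint_{\R^{N+1}_+} y^{1-2s}\nabla U_n\cdot \nabla U_n^-\,dxdy = -\iint_{\R^{N+1}_+} y^{1-2s}|\nabla U_n^-|^2\,dxdy$ and $\int_{\R^N} u_n u_n^-\,dx=-\int_{\R^N}(u_n^-)^2\,dx$ yield
\begin{equation*}
\|U_n^-\|^{2}+\int_{\R^N} h\, u_n^-\,dx = -\langle I'(U_n),U_n^-\rangle.
\end{equation*}
Since $\|U_n^-\|\leq \|U_n\|$ and $\|I'(U_n)\|(1+\|U_n\|)\to 0$, the right-hand side is $o(1)$; both summands on the left being nonnegative, we conclude $U_n^-\to 0$ strongly in $\X$. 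Hence $W_n^- = U_n^-/\|U_n\| \to 0$ in $\X$, and as the cone of nonnegative functions is weakly closed in $\X$, $W\geq 0$ a.e.

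Next I would pass to the limit in the rescaled equation. For $\varphi\in \X$, dividing $\langle I'(U_n),\varphi\rangle$ by $\|U_n\|$ gives
\begin{equation*}
\iint_{\R^{N+1}_{+}} y^{1-2s}\nabla W_n\cdot\nabla\varphi\,dxdy
+\int_{\R^N} w_n\varphi\,dx
-\int_{\R^N} k(x)\frac{f(u_n)}{\|U_n\|}\varphi\,dx
-\frac{1}{\|U_n\|}\int_{\R^N} h\varphi\,dx=o(1),
\end{equation*}
where the $o(1)$ stems from $\|I'(U_n)\|/\|U_n\|\to 0$. The two linear terms converge to their $W$-counterparts by weak convergence $W_n\rightharpoonup W$ in $\X$ and continuity of the trace, and the $h$-term vanishes since $\|U_n\|\to\infty$. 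For the nonlinear term, write $f(u_n)/\|U_n\|=\psi(u_n)\,w_n$ with $\psi(t):=f(t)/t$ for $t>0$ and $\psi(t):=0$ for $t\leq 0$; by $(f2)$ and $(f4)$, $\psi$ is bounded with $\lim_{t\to+\infty}\psi(t)=l$. On $\{w>0\}$, $u_n=\|U_n\|w_n\to+\infty$ a.e., so $\psi(u_n)\to l$ and $\psi(u_n)w_n\to lw$; on $\{w=0\}$, $\psi(u_n)$ is bounded while $w_n\to 0$ a.e., so $\psi(u_n)w_n\to 0=lw$. Combined with the domination $|\psi(u_n)w_n|\leq \|\psi\|_\infty |w_n|$ and the $L^2(\R^N)$-boundedness of $\{w_n\}$, this pointwise a.e.\ convergence upgrades to weak convergence $\psi(u_n)w_n\rightharpoonup lw$ in $L^2(\R^N)$; pairing with $k\varphi(\cdot,0)\in L^2(\R^N)$ produces the desired limit, and $W$ weakly solves \eqref{2.12}.

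Finally, for $W\not\equiv 0$, I test with $U_n$ itself: since $\|I'(U_n)\|\,\|U_n\|\to 0$, one has $\langle I'(U_n),U_n\rangle=o(1)$, and dividing by $\|U_n\|^2$ (the $h$-contribution is $O(|h|_2/\|U_n\|)=o(1)$) gives
\begin{equation*}
\int_{\R^N} k(x)\psi(u_n) w_n^2\,dx = 1+o(1).
\end{equation*}
Assume by contradiction that $W\equiv 0$. Condition $(K)$ produces $\theta\in (0,1)$ with $k(x)\psi(t)\leq \theta$ for every $t\in\R$ and $|x|\geq R_0$, so
\begin{equation*}
\int_{|x|\geq R_0} k\psi(u_n)w_n^2\,dx \leq \theta\int_{\R^N} w_n^2\,dx \leq \theta.
\end{equation*}
On $B_{R_0}$, $|k\psi(u_n)w_n^2|\leq \|k\|_\infty \|\psi\|_\infty w_n^2$, and the strong convergence $w_n\to 0$ in $L^2(B_{R_0})$ from \eqref{2.11} forces this contribution to zero. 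Passing to the limit gives $1\leq \theta<1$, a contradiction; hence $W\not\equiv 0$. The hard part is this last step: condition $(K)$ enters quantitatively to create the uniform gap $\theta<1$ on the unbounded region, while the local $L^2$-compactness recorded in \eqref{2.11} is what allows one to eliminate any bounded-region mass under the assumption $W\equiv 0$.
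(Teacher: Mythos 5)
Your proof is correct and, apart from reordering the three steps (the paper proves $W\not\equiv 0$ first, then $W\geq 0$, then the PDE, whereas you do nonnegativity, then the PDE, then nontriviality), it follows the paper's argument in all essential respects: the same use of $(f1)$ to kill the $f(u_n)u_n^-$ term for nonnegativity, the same decomposition of $\R^N$ into $\{w>0\}$ and $\{w=0\}$ together with $L^2$-boundedness of $\{f(u_n)/u_n\cdot w_n\}$ to pass to the limit in the rescaled equation, and the same $(K)$-driven gap $\theta<1$ combined with local $L^2$-compactness to rule out $W\equiv 0$. Your minor variant of testing the nonnegativity step with $U_n^-$ rather than $W_n^-$ is harmless, since $\|I'(U_n)\|\,\|U_n\|\to 0$ still controls $\langle I'(U_n),U_n^-\rangle$.
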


\begin{proof}
Firstly, we show that $W \not \equiv 0$. We argue by contradiction, and we assume that $W\equiv 0$. \\
By using the Sobolev embedding, we can see that $w_{n}\rightarrow 0$ strongly in $L^{2}(B_{R_{0}})$ where $R_{0}$ is given by (K). 
On the other hand, by $(f1)$, $(f4)$ and $l<+\infty$, we can find  $C>0$ such that
\begin{equation}\label{2.13}
\frac{f(t)}{t}\leq C, \mbox{ for all } t\in \R. 
\end{equation} 
Therefore, we can deduce that
\begin{equation}\label{2.14}
\int_{|x|<R_{0}} k(x) \frac{f(u_{n})}{u_{n}} w_{n}^{2} \, dx \leq C|k|_{\infty} \int_{|x|<R_{0}} w_{n}^{2} \, dx \rightarrow 0. 
\end{equation}
By the condition $(K)$, we can find $\eta \in (0,1)$ such that 
\begin{equation}\label{2.15}
\sup \left\{\frac{f(t)}{t} : t>0\right\} < \eta \inf \left\{ \frac{1}{k(x)} : |x|\geq R_{0}\right\}, 
\end{equation}
so, for all $n\in \N$, we get
\begin{equation}\label{2.16}
\int_{|x|\geq R_{0}} k(x) \frac{f(u_{n})}{u_{n}} |w_{n}|^{2}\, dx \leq \eta \int_{|x|\geq R_{0}} |w_{n}|^{2} \, dx \leq \eta <1. 
\end{equation}
Putting together \eqref{2.14} and \eqref{2.16}, we have 
\begin{equation}\label{2.17}
\limsup_{n\rightarrow \infty} \int_{\R^{N}} k(x)\frac{f(u_{n})}{u_{n}} w_{n}^{2} \, dx <1. 
\end{equation}
Now, by using the fact that $\|U_{n}\|\rightarrow \infty$ and \eqref{2.10}, we can see that
\begin{equation*}
\frac{\langle I'(U_{n}) , U_{n}\rangle}{\|U_{n}\|^{2}}= o(1), 
\end{equation*}
that is 
\begin{equation*}
o(1)= \|W_{n}\|^{2}- \int_{\R^{N}} k(x) \frac{f(u_{n})}{u_{n}} w_{n}^{2} \, dx = 1- \int_{\R^{N}} k(x) \frac{f(u_{n})}{u_{n}} w_{n}^{2} \, dx, 
\end{equation*}
which yields a contradiction in view of \eqref{2.17}. Then, we have proved that $W\not \equiv 0$. 

In what follows, we will show that $W$ is nonnegative, that is $W\geq 0$. 
Let $W_{n}^{-}(x)=\max \{ - W_{n}(x), 0\}$, and we observe that $\{W_{n}^{-}\}$ is bounded in $\X$. 

Since $\|U_{n}\|\rightarrow \infty$, we obtain that
\begin{equation*}
\frac{\langle I'(U_{n}), W_{n}^{-} \rangle}{\|U_{n}\|}=o(1), 
\end{equation*}
which gives 
\begin{equation}\label{2.18}
-\|W_{n}^{-}\|^{2} = \int_{\R^{N}} k(x)\frac{f(u_{n})}{\|u_{n}\|} w_{n}^{-} \, dx + o(1). 
\end{equation}
Taking into account $(f1)$, we know that $f(t)\equiv 0$ for all $t\leq 0$, so \eqref{2.18} implies that 
$$
\lim_{n\rightarrow \infty}\|W_{n}^{-}\|=0,
$$ 
which gives $W^{-}=0$ a.e. $x\in \R^{N}$, that is $W\geq 0$. 

Finally, we prove that $W$ is a solution to \eqref{2.12}. By using \eqref{2.10} and $\|U_{n}\|\rightarrow \infty$, we get
\begin{equation*}
\frac{\langle I'(U_{n}), \varPhi \rangle}{\|U_{n}\|}=o(1), \mbox{ for any } \varPhi \in C^{\infty}_{0}(\R^{N}), 
\end{equation*} 
or explicitly
\begin{equation}\label{2.19}
\iint_{\R^{N+1}_{+}} y^{1-2s}\nabla W_{n} \nabla \varPhi \, dx dy+\int_{\R^{N}} w_{n} \phi \, dx = \int_{\R^{N}} k(x)\frac{f(u_{n})}{u_{n}} w_{n} \phi \, dx + o(1)
\end{equation}
where we have used the notation $\phi=\varPhi(\cdot, 0)$.
Since $W_{n}\rightharpoonup W$ in $\X$ and $w_{n}\rightarrow w$ in $L^{2}_{loc}(\R^{N})$, we can deduce that
\begin{equation}\label{2.20}
\iint_{\R^{N+1}_{+}} y^{1-2s}\nabla W_{n} \nabla \varPhi \, dx dy+\int_{\R^{N}} w_{n} \phi \, dx = \int_{\R^{N}} k(x)\frac{f(u_{n})}{u_{n}} w_{n}\phi \, dx + o(1). 
\end{equation}
As a consequence, to prove that $W$ solves \eqref{2.12}, it is suffices to show that
\begin{equation}\label{2.21}
\int_{\R^{N}} k(x)\frac{f(u_{n})}{u_{n}} w_{n}(x)\phi(x)\, dx \rightarrow \int_{\R^{N}} l k(x) w(x) \phi(x)\, dx.  
\end{equation}
Firstly, we note that by \eqref{2.13} and $\|W_{n}\|=1$ we get
\begin{align*}
\int_{\R^{N}} \left|\frac{f(u_{n})}{u_{n}} w_{n}(x)\right|^{2}\, dx\leq C \int_{\R^{N}} w^{2}_{n}\, dx\leq C\|W_{n}\|^{2}=C
\end{align*}
that is $\{\frac{f(u_{n})}{u_{n}} w_{n}\}$ is bounded in $L^{2}(\R^{N})$.

Now, let us define the following sets
\begin{equation*}
\Omega_{+}=\{x\in \R^{N} : w(x)>0\} \, \mbox{ and } \, \Omega_{0}=\{x\in \R^{N} : w(x)=0\}. 
\end{equation*}
In view of \eqref{2.11}, it is clear that $u_{n}(x)\rightarrow +\infty$ a.e. in $x\in \Omega_{+}$ . 
Then, by $(f4)$, it follows that
\begin{equation}\label{AH6}
\frac{f(u_{n})}{u_{n}} w_{n}(x) \rightarrow l w(x) \, \mbox{ a.e. in } x\in \Omega_{+}. 
\end{equation}
Since $w_{n}\rightarrow 0$ a.e. in $x\in \Omega_{0}$, from \eqref{2.13} we obtain that
\begin{equation}\label{AH7}
\frac{f(u_{n})}{u_{n}} w_{n}(x) \rightarrow 0\equiv l w(x) \, \mbox{ a.e. in } x\in \Omega_{0}.
\end{equation}
Putting together (\ref{AH6}) and (\ref{AH7}), we can deduce that
\begin{equation}\label{2.22}
\frac{f(u_{n})}{u_{n}} w_{n}(x) \rightharpoonup l w(x) \, \mbox{ in }  L^{2}(\R^{N}). 
\end{equation}
Now, by using the facts $\phi \in C^{\infty}_{0}(\R^{N})$ and $k\in L^{\infty}(\R^{N})$, we can see that $z(x)= k(x)\phi(x) \in L^{2}(\R^{N})$, and this together with \eqref{2.22} implies that
\begin{equation*}
\int_{\R^{N}} \frac{f(u_{n})}{u_{n}} w_{n}(x) z(x) \rightarrow \int_{\R^{N}} l w(x) z(x)\, dx \mbox{ as } n\rightarrow \infty,
\end{equation*}
that is \eqref{2.21} holds.  

\end{proof}

\begin{lem}\label{lem2.5}
If $k\in L^{\infty}(\R^{N}, \R^{+})$ and let $\mu^{*}$ be defined by (1.4) with $l\in (\mu^{*}, +\infty)$. Then, \eqref{2.12} has no any nontrivial nonnegative solution. 
\end{lem}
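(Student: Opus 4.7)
The plan is to assume, for contradiction, that $W\in\X$ is a nontrivial nonnegative solution of \eqref{2.12} and to deduce, via a Picone-type identity for the $s$-harmonic extension, that $\mu^{*}\geq l$, which contradicts $l\in(\mu^{*},+\infty)$. Morally, such a positive $W$ would play the role of a ``first eigenfunction'' for the generalized eigenvalue problem defining $\mu^{*}$, and its strict positivity would drive the eigenvalue down to the infimum $\mu^{*}$.

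I would first upgrade nonnegativity to strict positivity. Since $W\not\equiv 0$ satisfies $\dive(y^{1-2s}\nabla W)=0$ in $\R^{N+1}_{+}$ with boundary condition $\frac{\partial W}{\partial\nu^{1-2s}}=(lk(x)-1)w$ on $\R^{N}$, the strong maximum principle and the Hopf-type boundary lemma for degenerate elliptic operators with the $A_{2}$-weight $y^{1-2s}$ (Cabr\'e--Sire, Caffarelli--Silvestre) yield $W>0$ in $\R^{N+1}_{+}$ and $w=W(\cdot,0)>0$ a.e.\ on $\R^{N}$, so that $U^{2}/W$ is meaningful as a test object.

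The heart of the argument is the pointwise Picone identity, valid on $\{W>0\}$:
\[
y^{1-2s}|\nabla U|^{2}-y^{1-2s}\nabla\!\left(\frac{U^{2}}{W}\right)\!\cdot\nabla W=y^{1-2s}\Bigl|\nabla U-\frac{U}{W}\nabla W\Bigr|^{2}\geq 0.
\]
For any $U\in\X$ whose trace satisfies $u\geq 0$ a.e., I would integrate this over $\R^{N+1}_{+}$ and integrate by parts on the right-hand side: the bulk term vanishes because $\dive(y^{1-2s}\nabla W)=0$, while the boundary trace on $\{y=0\}$, via the definition of $\frac{\partial W}{\partial\nu^{1-2s}}$ and the boundary condition of \eqref{2.12}, contributes $\int_{\R^{N}}\frac{u^{2}}{w}\frac{\partial W}{\partial\nu^{1-2s}}\,dx=\int_{\R^{N}}(lk(x)-1)u^{2}\,dx$. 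Rearranging yields
\[
\|U\|_{\X}^{2}\geq l\int_{\R^{N}}k(x)u^{2}\,dx\qquad\text{for every $U\in\X$ with nonnegative trace $u$.}
\]

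To close the contradiction, I would apply this estimate with $U$ equal to the $s$-harmonic extension of $|u|$, for an arbitrary $u\in\h$; then $\|U\|_{\X}^{2}=[|u|]_{\h}^{2}+\||u|\|_{L^{2}}^{2}\leq\|u\|_{\h}^{2}$, since the Gagliardo seminorm is non-increasing under the absolute value. Consequently $\|u\|_{\h}^{2}\geq l\int_{\R^{N}}k(x)u^{2}\,dx$ for every $u\in\h$, and taking the infimum over $u$ with $\int k(x)u^{2}\,dx=1$ in the definition \eqref{1.4} produces $\mu^{*}\geq l$, in contradiction with $l>\mu^{*}$. The principal obstacle is making the Picone calculation rigorous in the degenerate weighted setting: the weight $y^{1-2s}$ vanishes at $y=0$, and $U^{2}/W$ need not belong a priori to $\X$. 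I would handle this -- this is the ``trace handling'' mentioned in the introduction -- by a standard double approximation, replacing $W$ by $W+\delta$ and truncating $U$ via $\min(U,M)$ together with a spatial cutoff outside $B_{R}^{+}$, establishing the Picone inequality for the regularized objects using Lemma \ref{DMV} and the trace inequality \eqref{Seok}, and then letting $\delta\to 0$ and $M,R\to\infty$.
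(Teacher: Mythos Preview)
Your Picone argument is sound and reaches the same contradiction, but it is a genuinely different route from the paper's. The paper \emph{localizes the eigenvalue problem}: it picks an almost-minimizer $V_{\delta}\in C^{\infty}_{0}(\overline{\R^{N+1}_{+}})$ for $\mu^{*}$ supported in some half-ball $B_{R}^{+}$, introduces the first eigenvalue $\mu_{R}$ of the corresponding problem on $B_{R}^{+}$ with vanishing on the spherical cap $\Gamma_{R}^{+}$, and notes $\mu_{R}\leq\|V_{\delta}\|^{2}<l$. By compactness of $H^{1}_{\Gamma_{R}^{+}}(B_{R}^{+})\hookrightarrow L^{2}(\Gamma_{R}^{0})$ and the strong maximum principle there is a positive eigenfunction $W_{R}$; extending $W_{R}$ by zero and cross-testing \eqref{2.12} against $W_{R}$ and the $W_{R}$-equation against the hypothetical solution $U$ forces $\mu_{R}\int_{\Gamma_{R}^{0}}k\,w_{R}u=l\int_{\Gamma_{R}^{0}}k\,w_{R}u$, hence $\mu_{R}=l$, contradicting $\mu_{R}<l$. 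The localization buys precisely what your regularization has to work for: since $W_{R}$ is compactly supported it is automatically an admissible test function in the global weak formulation, so no $\delta$-shift of the denominator and no $M$-$R$ truncation are needed, and the ``trace handling'' collapses to a one-line symmetry of the bilinear form. Your approach, in turn, avoids building the auxiliary Dirichlet problem and its compactness/positivity analysis, and delivers directly the sharper statement $\mu^{*}\geq l$. A minor remark: in your argument the restriction to nonnegative traces is unnecessary, since $U^{2}/(W+\delta)$ and the pointwise Picone identity are well-defined for any real-valued $U$.
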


\begin{proof}
Since $l>\mu^{*}$, there is a constant $\delta>0$ such that $\mu^{*}<\mu^{*} +\delta <l$. By the definition of $\mu^{*}$, there exists $V_{\delta}\in \X$ such that $\int_{\R^{N}} K(x) v_{\delta}^{2}(x) \, dx =1$ and 
\begin{equation*}
\mu^{*}\leq \|V_{\delta}\|^{2}<\mu^{*}+\delta. 
\end{equation*}
Since $C^{\infty}_{0}(\R^{N+1}_{+})$ is dense in $\X$, we may assume $V_{\delta}\in C^{\infty}_{0}(\R^{N+1}_{+})$. Let $R>0$ be such that $\supp V_{\delta}\subset B^{+}_{R}$ and define 
\begin{equation*}
\mu_{R}= \inf \left\{\iint_{B^{+}_{R}} y^{1-2s} |\nabla U|^{2}\, dx dy +\int_{\Gamma^{0}_{R}} u^{2} \, dx : \int_{\Gamma^{0}_{R}} K(x)u^{2}(x)\, dx=1, U\in H^{1}_{\Gamma^{+}_{R}}(B^{+}_{R}) \right\}, 
\end{equation*}
where we used the following notations
$$
B_{R}^{+}=\{(x, y)\in \R^{N+1}: y>0, |(x, y)|< R\},
$$
$$
\Gamma_{R}^{+}=\{(x, y)\in \R^{N+1}: y\geq 0, |(x, y)|= R\},
$$
$$
\Gamma_{R}^{0}=\{(x, 0)\in \partial \R^{N+1}_{+}: |x|< R\},
$$
and
$$
H^{1}_{\Gamma^{+}_{R}}(B^{+}_{R})=\{V\in H^{1}(B_{R}^{+}, y^{1-2s}): V\equiv 0 \mbox{ on } \Gamma_{R}^{+}\}.
$$
Since $V_{\delta}\equiv 0$ on $\Gamma_{R}^{+}$,
we can infer that $V_{\delta}\in H^{1}_{\Gamma_{R}^{+}}(B^{+}_{R})$ and 
\begin{equation}\label{new2.20}
\mu_{R}\leq \|V_{\delta}\|^{2} <\mu^{*}+\delta<l. 
\end{equation}
By the compactness of the embedding $H^{1}_{\Gamma_{R}^{+}}(B^{+}_{R})\subset L^{2}(\Gamma^{0}_{R})$, it is not difficult to see that there exists $W_{R}\in H^{1}_{\Gamma_{R}^{+}}(B^{+}_{R})\setminus \{0\}$ with $W_{R}\geq 0$ and $\int_{\Gamma^{0}_{R}} K(x)w_{R}^{2} (x)\, dx=1$ such that
\begin{equation}\label{new2.21}
\left\{
\begin{array}{ll}
\dive(y^{1-2s} \nabla W_{R})=0 & \mbox{ in } B^{+}_{R} \\
\frac{\partial W_{R}}{\partial \nu^{1-2s}}=-w_{R}+ \mu_{R} k(x) w_{R}  & \mbox{ on } \Gamma_{R}^{0}  \\
W^{R}=0 & \mbox{ on } \Gamma_{R}^{+}.
\end{array}
\right. 
\end{equation}
It follows from the strong maximum principle \cite{CabS} that $W_{R}>0$ on $B_{R}^{+}$. We extend $W_{R}=0$ in $\R^{N+1}_{+}\setminus B_{R}^{+}$, so that $W_{R}\in \X$.
Therefore, if $U\not \equiv 0$, $U\in \X$ is a nonnegative solution of \eqref{2.12}, then
\begin{align}\begin{split}\label{new2.22}
\mu_{R} \int_{\Gamma^{0}_{R}} k(x) w_{R} u \, dx &= \iint_{B^{+}_{R}} y^{1-2s} \nabla W_{R}\nabla U \, dx dy +\int_{\Gamma_{R}^{0}} w_{R} u\, dx\\
&= l \int_{\Gamma^{0}_{R}} k(x) u w_{R} \, dx.   
\end{split}\end{align}
Using $u\geq 0$ and $u\not\equiv 0$, we may choose $R>0$ large enough such that $\int_{\Gamma_{R}^{0}} K(x) u w_{R}\, dx>0$. \\
Then, \eqref{new2.22} implies that $\mu_{R}= l$, which is a contradiction in view of \eqref{new2.20}. 
\end{proof}

\begin{proof}[Proof of Theorem 1.1]
By using Lemmas \ref{lem2.4} and \ref{lem2.5}, it is obvious that the situation $\|U_{n}\|\rightarrow \infty$ cannot occur. Therefore, the sequence $\{U_{n}\}$ is bounded in $\X$. Taking into account Lemma \ref{lem2.1} and the Harnack inequality \cite{CabS}, we can deduce that problem \eqref{2.1} admits a positive solution $U_{1}\in \X$ with $I(U_{1})>0$. Then, the thesis of theorem follows by  Theorem \ref{thm2.1}. 

\end{proof}

\section{Superlinear case: proof of Theorem \ref{thm2}}
\noindent

This section is devoted to the proof of Theorem \ref{thm2}.
For simplicity, we consider problem (\ref{R}) with $k(x)\equiv 1$, that is 
\begin{equation}\label{3.1}
\left\{
\begin{array}{ll}
\dive(y^{1-2s} \nabla U)=0 & \mbox{ in } \R^{N+1}_{+} \\
\frac{\partial U}{\partial \nu^{1-2s}}=-u+ f(u)+h(x)  & \mbox{ on } \R^{N} \\
\end{array}
\right.
\end{equation}
where $h(x)= h(|x|)\in C^{1}(\R^{N})\cap L^{2}(\R^{N})$, $h(x)\geq 0$, $h(x)\not \equiv 0$ and $f$ satisfies $(f1)$-$(f4)$ with $l=+\infty$. Since we assume that $k(x)\equiv 1$ and $h(x)$ is radial, it is natural to work on the space of the function belonging to $\X$ which are radial with respect to $x$, that is
$$
\Y=\left\{U\in \X: U(x, y)=U(|x|, y)\right\}.
$$ 
We begin proving the following preliminary result
\begin{thm}\label{thm3.1}
Suppose that $h(x)= h(|x|)\in L^{2}(\R^{N})$, $h(x)\geq 0$, $h(x)\not \equiv 0$ and conditions $(f1)$-$(f3)$ holds, then there exist $m_{1}>0$ and $\tilde{U}_{0}\in \Y$ such that $I'(\tilde{U}_{0})=0$ and $I(\tilde{U}_{0})<0$ if $|h|_{2}<m_{1}$. 
\end{thm}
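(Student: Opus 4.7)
The plan is to mirror the proof of Theorem \ref{thm2.1} in the radial setting. Since the data $k\equiv 1$ and $h=h(|x|)$ are $O(N)$-invariant, the functional $I$ is invariant under the rotation action on $\X$; by the principle of symmetric criticality it suffices to produce a critical point of the restriction $I|_\Y$, where $\Y$ is closed (hence complete) in $\X$. The role of condition $(K)$ in Section 3 will be played here by the radial compact embedding in Theorem \ref{Lions}.

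First I would set up the local geometry. The estimate \eqref{2.7} with $k\equiv 1$, together with the trace inequality \eqref{Mia} and the $L^{2}$-control on the trace, yields exactly as in Lemma \ref{lem2.2}
$$I(U)\ge \|U\|_{\X}\Bigl[\bigl(\tfrac{1}{2}-\tfrac{\varepsilon}{2}\bigr)\|U\|_{\X}-\tfrac{C_{\varepsilon}}{p+1}S_{*}^{p+1}\|U\|_{\X}^{p}-|h|_{2}\Bigr],$$
so for fixed $\varepsilon\in(0,1)$ there exist $\rho,\alpha>0$ with $I|_{\|U\|=\rho}\ge\alpha$ whenever $|h|_{2}<m_{1}$, where $m_1$ is the maximum of the bracket over $t\ge 0$ as in the statement. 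Since $h\ge 0$, $h\not\equiv 0$ is radial, I can pick a radial $V\in\Y$ with $\int_{\R^{N}}h\,v\,dx>0$ (e.g.\ the $s$-harmonic extension of a radial test function with positive mean against $h$); then $I(tV)\le\frac{t^{2}}{2}\|V\|_{\X}^{2}-t\int h\,v\,dx<0$ for small $t>0$, giving
$$c_{0}:=\inf_{\overline{B}_{\rho}\cap\Y}I<0.$$
Applying Ekeland's variational principle on $\overline{B}_{\rho}\cap\Y$ produces $\{U_{n}\}\subset\overline{B}_{\rho}\cap\Y$ with $I(U_{n})\to c_{0}$; the same argument as in Theorem \ref{thm2.1} shows $\|U_{n}\|_{\X}<\rho$ eventually and $(I|_{\Y})'(U_{n})\to 0$, and symmetric criticality upgrades this to $I'(U_{n})\to 0$ in $\X^{*}$, yielding a bounded Palais-Smale sequence of $I$ in $\Y$.

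The main (and essentially only new) step is to upgrade this to a strongly convergent subsequence in $\Y$. Extracting a subsequence with $U_{n}\rightharpoonup \tilde U_{0}$ in $\Y$ and $u_{n}\rightharpoonup \tilde u_{0}$ in $H^{s}_{r}(\R^{N})$, Theorem \ref{Lions} gives $u_{n}\to \tilde u_{0}$ strongly in $L^{p+1}(\R^{N})$ because $p+1\in(2,2^{*}_{s})$. Testing $\langle I'(U_{n}),U_{n}-\tilde U_{0}\rangle =o(1)$ and using the weak convergence in the $\X$-inner product,
$$\|U_{n}\|_{\X}^{2}-\langle U_{n},\tilde U_{0}\rangle_{\X}=\int_{\R^{N}}\bigl[f(u_{n})+h\bigr](u_{n}-\tilde u_{0})\,dx+o(1).$$
The $h$-integral tends to $0$ by weak $L^{2}$-convergence of $u_{n}$. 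For the nonlinear term I use the growth bound $|f(t)|\le \varepsilon|t|+C_{\varepsilon}|t|^{p}$ that follows from $(f1)$-$(f3)$, Hölder's inequality, and the uniform $L^{2}$- and $L^{p+1}$-bounds on $u_{n}$:
$$\Bigl|\int_{\R^{N}}f(u_{n})(u_{n}-\tilde u_{0})\,dx\Bigr|\le \varepsilon\,\|u_{n}\|_{2}\|u_{n}-\tilde u_{0}\|_{2}+C_{\varepsilon}\|u_{n}\|_{p+1}^{p}\|u_{n}-\tilde u_{0}\|_{p+1}\le C\varepsilon+o(1).$$
Letting $\varepsilon\to 0^{+}$ yields $\|U_{n}\|_{\X}\to\|\tilde U_{0}\|_{\X}$, which together with weak convergence gives $U_{n}\to\tilde U_{0}$ strongly in the Hilbert space $\Y$. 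Passing to the limit in $I(U_{n})\to c_{0}$ and $I'(U_{n})\to 0$ produces $\tilde U_{0}\in\Y$ with $I(\tilde U_{0})=c_{0}<0$ and $(I|_{\Y})'(\tilde U_{0})=0$; symmetric criticality then gives $I'(\tilde U_{0})=0$ in $\X^{*}$, completing the proof. The substantive obstacle here is precisely the replacement of $(K)$ by radial compactness in the nonlinear term, and this is handled cleanly because $(f3)$ forces $p+1<2^{*}_{s}$.
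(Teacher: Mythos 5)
Your proof is correct and follows essentially the same route as the paper's: an Ekeland argument on $\overline{B}_{\rho}\cap\Y$ to extract a bounded Palais--Smale sequence with negative energy level, then Lions' radial compactness (Theorem \ref{Lions}) to pass to a strongly convergent subsequence and obtain the critical point. The only deviations are cosmetic simplifications — you test $I'(U_n)$ against $U_n-\tilde U_0$ in one step and dispatch the nonlinear term with the growth bound from $(f1)$--$(f3)$ and Hölder alone, whereas the paper tests against $U_n$ and $\tilde U_0$ separately and additionally invokes Lemma \ref{Strauss}; you also make the appeal to symmetric criticality explicit where the paper leaves it tacit.
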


\begin{proof}
Arguing as in the proof of Theorem \ref{thm2.1}, by applying the Ekeland's variational principle, we can obtain the existence of a bounded (PS) sequence $\{\tilde{U}_{n}\}\subset \Y$ such that
\begin{equation*}
I(\tilde{U}_{n})\rightarrow \tilde{c}_{0}:= \inf \{I(U): U\in \Y \mbox{ and } \|U\|=\rho \}<0, 
\end{equation*}
where $\rho$ is given by Lemma \ref{lem2.2}. We claim that such infimum is achieved. \\
By using Theorem \ref{Lions}, we may assume that there exists $\tilde{U}_{0}\in \Y$ such that $\tilde{U}_{n}\rightharpoonup \tilde{U}_{0}$ in $\Y$, $\tilde{u}_{n}\rightarrow \tilde{u}_{0}$ in $L^{p+1}(\R^{N})$. 

Taking into account $(f1)$-$(f3)$, Theorem \ref{FSI}, and by exploiting the fact that $\{U_{n}\}$ is bounded in $\Y$, we can see that 
\begin{align*} 
\left|\int_{\R^{N}} f(\tilde{u}_{n})(\tilde{u}_{n}-\tilde{u}_{0}) \, dx\right|\leq \e|\tilde{u}_{n}|_{2} |\tilde{u}_{n}-\tilde{u}_{0}|_{2}+C_{\e}|\tilde{u}_{n}|^{p}_{p+1}|\tilde{u}_{n}-\tilde{u}_{0}|_{p+1}\leq C\e+C_{\e}C|\tilde{u}_{n}-\tilde{u}_{0}|_{p+1}
\end{align*}
Hence
\begin{align*}
\lim_{n\rightarrow \infty} \left|\int_{\R^{N}} f(\tilde{u}_{n})(\tilde{u}_{n}-\tilde{u}_{0}) \, dx\right|\leq C\e
\end{align*}
and by the arbitrariness of $\e$, we deduce that 
$$
\int_{\R^{N}} f(\tilde{u}_{n})(\tilde{u}_{n}-\tilde{u}_{0}) \, dx\rightarrow 0.
$$
Putting together $(f1)$-$(f3)$ and by using Lemma \ref{Strauss}, we can obtain that
$$
\int_{\R^{N}} f(\tilde{u}_{n})\tilde{u}_{n} \, dx\rightarrow \int_{\R^{N}} f(\tilde{u}_{0})\tilde{u}_{0} \, dx.
$$
Then we can infer that 
$$
\int_{\R^{N}} (f(\tilde{u}_{n})-f(\tilde{u}_{0}))\tilde{u}_{0} \, dx=\int_{\R^{N}}(f(\tilde{u}_{n})\tilde{u}_{n}-f(\tilde{u}_{0})\tilde{u}_{0})\, dx -\int_{\R^{N}}f(\tilde{u}_{n})(\tilde{u}_{n}-\tilde{u}_{0}) \, dx\rightarrow 0.
$$
On the other hand, $\tilde{u}_{n}\rightharpoonup \tilde{u}_{0}$ in $L^{2}(\R^{N})$, so by using the fact that $h\in L^{2}(\R^{N})$, we also have
$$
\int_{\R^{N}} h(x)\tilde{u}_{n} \, dx\rightarrow \int_{\R^{N}} h(x)\tilde{u}_{0} \, dx.
$$
Then, by combining $\langle I'(\tilde{U}_{n}), \tilde{U}_{n} \rangle\rightarrow 0$, $\langle I'(\tilde{U}_{n}), \tilde{U}_{0}\rangle \rightarrow 0$, and the above relations, it follows that $\tilde{U}_{n}\rightarrow \tilde{U}_{0}$ strongly in $\Y$. \\
Therefore, we get 
$$
I(\tilde{U}_{0})= \tilde{c}_{0}<0 \mbox{ and } I'(\tilde{U}_{0})=0.
$$ 

\end{proof}

\noindent
Now, in order to prove that \eqref{3.1} has a mountain pass type solution, we use the following abstract result due to Jeanjean \cite{J}:
\begin{thm}\cite{J}\label{thm3.2}
Let $(X, \|\cdot\|)$ be a Banach space and $J \subset \R_{+}$ be an interval.
Let $(\I_{\lambda})_{\lambda\in J}$ be a family of $C^{1}$ functionals on $X$ of the form
$$
\I_{\lambda}(u)=A(u)-\lambda B(u), \quad \mbox{ for } \lambda \in J,
$$
where $B(u)\geq 0$ for all $u\in X$, and either $A(u)\rightarrow \infty$ or $B(u)\rightarrow \infty$ as $\|u\|\rightarrow \infty$. \\
We assume that there exist $v_{1}, v_{2}\in X$ such that
$$
c_{\lambda}=\inf_{\gamma \in \Gamma} \max_{t\in [0, 1]} \I_{\lambda}(\gamma(t))>\max\{\I_{\lambda}(v_{1}), \I_{\lambda}(v_{2})\}, \quad \forall \lambda \in J
$$
where 
$$
\Gamma=\{\gamma\in C([0, 1], X): \gamma(0)=v_{1}, \gamma(1)=v_{2}\}.
$$
Then, for almost every $\lambda\in J$, there is a sequence $(v_{n})\subset X$ such that
\begin{compactenum}[(i)]
\item $(v_{n})$ is bounded;
\item $\I_{\lambda}(v_{n})\rightarrow c_{\lambda}$;
\item $\I_{\lambda}(v_{n})\rightarrow 0$ on $X^{-1}$.
\end{compactenum}
Moreover, the map $\lambda \mapsto c_{\lambda}$ is continuous from the left hand-side.
\end{thm}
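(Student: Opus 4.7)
The plan is to deploy Jeanjean's monotonicity trick: since $B\geq 0$, the minimax value $c_\lambda$ is monotone in $\lambda$, hence differentiable at almost every point of $J$ by Lebesgue's theorem, and at each such point a quantitative minimax argument will produce a bounded Palais--Smale sequence.

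First I would establish monotonicity. For each fixed $u\in X$ the map $\lambda\mapsto \I_\lambda(u)=A(u)-\lambda B(u)$ is non-increasing because $B(u)\geq 0$. Hence for $\lambda_1\leq\lambda_2$ in $J$ and any $\gamma\in\Gamma$ one has $\max_{t\in[0,1]}\I_{\lambda_2}(\gamma(t))\leq \max_{t\in[0,1]}\I_{\lambda_1}(\gamma(t))$, which gives $c_{\lambda_2}\leq c_{\lambda_1}$. Monotone real functions are differentiable almost everywhere, so there is a subset $J'\subset J$ of full Lebesgue measure where the finite derivative $c'_\lambda$ exists.

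The central step is to fix $\lambda\in J'$, pick any $\lambda_n\nearrow\lambda$ in $J$, and extract a bounded near-optimal sequence. Using the definition of $c_{\lambda_n}$, I would choose paths $\gamma_n\in\Gamma$ with $\max_t\I_{\lambda_n}(\gamma_n(t))\leq c_{\lambda_n}+(\lambda-\lambda_n)$, then select $t_n\in[0,1]$ satisfying $\I_\lambda(\gamma_n(t_n))\geq c_\lambda-(\lambda-\lambda_n)$, and set $v_n:=\gamma_n(t_n)$. The identity
\begin{equation*}
(\lambda-\lambda_n)B(v_n)=\I_{\lambda_n}(v_n)-\I_\lambda(v_n)\leq c_{\lambda_n}-c_\lambda+2(\lambda-\lambda_n),
\end{equation*}
combined with the bound $c_{\lambda_n}-c_\lambda\leq(-c'_\lambda+1)(\lambda-\lambda_n)$ valid for $n$ large by differentiability at $\lambda$, yields $B(v_n)\leq -c'_\lambda+3$. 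Since $\I_\lambda(v_n)\to c_\lambda$ and $A(v_n)=\I_\lambda(v_n)+\lambda B(v_n)$, the sequence $A(v_n)$ is likewise bounded; the coercivity hypothesis on $A$ or $B$ then forces $\{v_n\}$ to stay bounded in $X$.

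The main obstacle will be upgrading this bounded near-optimal sequence into a genuine Palais--Smale sequence for $\I_\lambda$ at level $c_\lambda$. I would argue by contradiction with a quantitative deformation lemma: if $\|\I'_\lambda\|_{X^{-1}}$ stayed above some $\alpha>0$ on a tubular neighbourhood of $\{v_n\}$, a pseudo-gradient flow of $-\I'_\lambda$, localised by a suitable cut-off, would deform $\gamma_n$ into an admissible path whose maximum of $\I_\lambda$ drops strictly below $c_\lambda$, contradicting the definition of $c_\lambda$; the delicate point is to verify that the deformation preserves uniform bounds on $A$ and $B$ along the path, which is what lets the modified sequence remain bounded. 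Finally, the left continuity of $\lambda\mapsto c_\lambda$ follows painlessly from monotonicity: given $\varepsilon>0$ and a near-optimal path $\gamma$ for $c_\lambda$, the estimate $\max_t\I_{\lambda_n}(\gamma(t))\leq c_\lambda+\varepsilon+(\lambda-\lambda_n)\max_t B(\gamma(t))$ forces $\limsup_{\lambda_n\nearrow\lambda}c_{\lambda_n}\leq c_\lambda$, and combined with $c_{\lambda_n}\geq c_\lambda$ this yields $\lim_{\lambda_n\nearrow\lambda}c_{\lambda_n}=c_\lambda$.
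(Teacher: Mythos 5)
The paper does not give a proof of this statement—it is quoted as Jeanjean's abstract result with a citation to \cite{J}—so there is no proof of record in the paper to compare against. That said, your sketch faithfully reconstructs Jeanjean's monotonicity trick as it appears in the original reference: monotonicity of $c_\lambda$ from $B\geq 0$, differentiability a.e. by Lebesgue, the quantitative bound $B(v_n)\leq -c'_\lambda+3$ extracted from near-optimal paths and maximizers, boundedness of $A(v_n)$ and hence of $v_n$ via the coercivity alternative, and a localized deformation argument to upgrade the bounded near-optimal points to a genuine Palais--Smale sequence. The left-continuity argument via $\max_t B(\gamma(t))<\infty$ along a fixed near-optimal path is also the standard one. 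The only place you would need to fill in real work is the deformation step (the construction of a cut-off pseudo-gradient flow confined to a bounded region where the estimates on $A$ and $B$ persist), which you correctly flag as the delicate point; the rest is essentially complete.
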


\noindent
For any $\lambda\in [\frac{1}{2}, 1]$, we introduce the following family of functionals $I_{\lambda}: \Y\rightarrow \R$ defined by 
\begin{equation*}
I_{\lambda}(U)= \frac{1}{2}\left[\iint_{\R^{N+1}_{+}} y^{1-2s} |\nabla U|^{2}\, dx dy + \int_{\R^{N}}u^{2} \, dx\right] - \lambda \int_{\R^{N}} (F(u) + h(x)u)\, dx. 
\end{equation*}
for any $U\in \Y$.\\

Next, our claim is to show that $I_{\lambda}$ verifies the assumptions of Theorem \ref{thm3.2}.
\begin{lem}\label{lem3.1}
Assume that $(f1)$-$(f4)$ with $l=+\infty$ hold. Then, 
\begin{compactenum}[$(i)$]
\item There exists $\bar{V}\in \Y\setminus \{0\}$ such that $I_{\lambda}(\bar{V})<0$ for all $\lambda \in [\frac{1}{2}, 1]$.  
\item For $m_{1}>0$ given in Theorem $\ref{thm3.1}$, if $|h|_{2}<m_{1}$, then
\begin{equation*}
c_{\lambda}= \inf_{\gamma \in \Gamma} \max_{t\in [0,1]} I_{\lambda}(\gamma(t))>\max \{I_{\lambda}(0), I_{\lambda}(\bar{V})\}\quad \forall \lambda \in \left[\frac{1}{2}, 1\right],
\end{equation*}
where $\Gamma= \{\gamma \in C([0,1], \Y)) : \gamma(0)=0, \gamma(1)= \bar{V}\}$. 
\end{compactenum}
\end{lem}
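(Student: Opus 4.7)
For part (i), I would select a fixed nonnegative, nontrivial, smooth, compactly supported, $x$-radial function $V\in \Y$ and look for $\bar V$ in the half-line $\{tV:t>0\}$. Since $h\ge 0$ and $v=V(\cdot,0)\ge 0$ with $v\not\equiv 0$, the cross term $\lambda t\int_{\R^N} h v\,dx$ is nonnegative and $F(tv)\ge 0$ by $(f1)$, so using $\lambda\ge 1/2$ yields
\begin{equation*}
\frac{I_{\lambda}(tV)}{t^{2}}\le \frac{\|V\|^{2}}{2}-\frac{1}{2}\int_{\R^{N}} \frac{F(tv)}{t^{2}}\,dx,\qquad t>0,\ \lambda\in[1/2,1].
\end{equation*}
Because $l=+\infty$, one has $F(s)/s^{2}\to +\infty$ as $s\to +\infty$. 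Since $v>0$ on a set of positive measure, $F(tv(x))/t^{2}=v(x)^{2}\,F(tv(x))/(tv(x))^{2}\to +\infty$ a.e.\ on $\{v>0\}$, and Fatou's lemma gives $\int_{\R^{N}} F(tv)/t^{2}\,dx\to +\infty$. Hence $I_{\lambda}(tV)/t^{2}\to -\infty$ uniformly in $\lambda\in[1/2,1]$, and choosing $t_{0}$ sufficiently large I set $\bar V:=t_{0}V$, ensuring $I_{\lambda}(\bar V)<0$ for all $\lambda\in[1/2,1]$ (and, for later use, $\|\bar V\|>\rho_{0}$ with $\rho_{0}$ from the next step).

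For part (ii), I would establish a mountain-pass ring condition that is uniform in $\lambda\in[1/2,1]$. Using the pointwise estimate \eqref{2.7}, the Sobolev embedding in $\X$ (which restricts to $\Y$), Cauchy--Schwarz for the $h$-term, and the trivial bound $\lambda\le 1$ throughout:
\begin{equation*}
I_{\lambda}(U)\ge \Bigl(\frac{1}{2}-\frac{\varepsilon}{2}\Bigr)\|U\|^{2}-\frac{C_{\varepsilon}}{p+1}S_{*}^{p+1}\|U\|^{p+1}-|h|_{2}\|U\|=\|U\|\bigl[g(\|U\|)-|h|_{2}\bigr],
\end{equation*}
where $g(t):=(\tfrac{1}{2}-\tfrac{\varepsilon}{2})t-\tfrac{C_{\varepsilon}}{p+1}S_{*}^{p+1}t^{p}$. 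By the definition of $m_{1}$ the maximum $\max_{t\ge 0}g(t)=m_{1}$ is attained at some $\rho_{0}>0$; since $|h|_{2}<m_{1}$, continuity produces $\alpha>0$ with $g(\rho_{0})-|h|_{2}\ge \alpha/\rho_{0}$, giving
\begin{equation*}
I_{\lambda}(U)\ge \alpha\quad\text{for all }U\in \Y,\ \|U\|=\rho_{0},\ \lambda\in\Bigl[\frac{1}{2},1\Bigr].
\end{equation*}

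With $\bar V$ chosen in part (i) so that $\|\bar V\|>\rho_{0}$, every path $\gamma\in\Gamma$ from $0$ to $\bar V$ is continuous in $\Y$, hence $\|\gamma(t)\|$ takes the value $\rho_{0}$ for some $t\in(0,1)$ by the intermediate value theorem. Therefore $\max_{t\in[0,1]}I_{\lambda}(\gamma(t))\ge \alpha$, and taking the infimum over $\gamma$ yields $c_{\lambda}\ge\alpha>0$. Since $I_{\lambda}(0)=0$ and $I_{\lambda}(\bar V)<0$, this proves $c_{\lambda}>\max\{I_{\lambda}(0),I_{\lambda}(\bar V)\}$ uniformly in $\lambda\in[1/2,1]$.

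The only mildly delicate point is keeping the two estimates uniform in $\lambda$; this succeeds because on one side $\lambda\le 1$ only weakens the negative contribution in the lower bound for $I_{\lambda}$, and on the other side $\lambda\ge 1/2$ keeps enough of the superlinear term to drive $I_{\lambda}(tV)\to -\infty$. The sign assumptions $h\ge 0$ and $V\ge 0$ are what eliminate a potential uniformity issue coming from the linear $h$-term on the half-line $\{tV\}$.
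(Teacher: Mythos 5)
Your proof is correct, and in part (i) it is actually a bit cleaner than the paper's. Where you pick any fixed nonnegative nontrivial $V\in\Y$ and argue directly that $\int_{\R^N} F(tv)/t^2\,dx\to+\infty$ (Fatou plus $F(s)/s^2\to\infty$, which does follow from $(f4)$ with $l=+\infty$), the paper instead first constructs $V$ with $\iint y^{1-2s}|\nabla V|^2\,dxdy<\delta\int v^2\,dx$ (invoking that $(-\Delta)^s$ has no $L^2$-eigenvalues) and then uses Fatou only to get the finite lower bound $\liminf\int F(tv)/t^2\,dx\geq(1+\delta)\int v^2\,dx$. That extra step is inherited from the asymptotically linear Lemma \ref{lem2.3}, where it is genuinely needed because $l<\infty$; for $l=\infty$ your shortcut is perfectly adequate and avoids an unnecessary detour. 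For part (ii) your argument coincides with the paper's in substance: the paper bounds $I_\lambda$ from below by the $\lambda$-independent functional $J$ and refers back to the Lemma \ref{lem2.2} ring estimate, while you make the ring estimate explicit (factoring out $\|U\|$, locating the maximizer $\rho_0$ of $g$, and using the intermediate value theorem to show every path crosses the sphere). One small thing worth stating explicitly, which you note in passing: after establishing $\rho_0$, $\bar V$ must be taken with $\|\bar V\|>\rho_0$, which is possible because $I_\lambda(tV)\to-\infty$ as $t\to\infty$.
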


\begin{proof}
$(i)$ For any $\delta>0$, we can find $V\in \Y\setminus \{0\}$ and $V\geq 0$ such that 
\begin{equation*}
\iint_{\R^{N+1}_{+}} y^{1-2s}|\nabla V|^{2}\, dx < \delta \int_{\R^{N}} v^{2} dx. 
\end{equation*}
This is lawful due to
$$
\inf\left\{ \iint_{\R^{N+1}_{+}} y^{1-2s}|\nabla U|^{2}\, dx : U\in \Y \mbox{ and } |u|_{2}=1\right\}=0
$$ 
(via the Pohozaev identity, one can see that $(-\Delta)^{s}$ has no eigenvalues in $H^{s}(\R^{N})$).
By using $(f4)$ with $l=+\infty$, and by applying Fatou's lemma, we can deduce that
\begin{equation*}
\lim_{t\rightarrow +\infty} \int_{\R^{N}} \frac{F(tv)}{t^{2}}\, dx \geq (1+\delta) \int_{\R^{N}} v^{2}\, dx. 
\end{equation*}
Hence, for any $\lambda\in [\frac{1}{2}, 1]$, we get
\begin{equation*}
\lim_{t\rightarrow +\infty} \frac{I_{\lambda}(tV)}{t^{2}}\leq \lim_{t\rightarrow +\infty} \frac{I_{\frac{1}{2}}(tV)}{t^{2}}\leq \frac{1}{2} \left( \iint_{\R^{N+1}_{+}} y^{1-2s}|\nabla V|^{2} \, dx dy -\delta \int_{\R^{N}}|v|^{2}\, dx \right) <0. 
\end{equation*}
Take $t_{1}>0$ large enough such that $I_{\frac{1}{2}}(t_{1}V)<0$, and we set $\bar{V}= t_{1}V$. Then, we can see that $I_{\lambda}(\bar{V})\leq I_{\frac{1}{2}}(\bar{V})<0$, that is the condition $(i)$ is satisfied. \\
$(ii)$ It is clear that, for any $\lambda\in [\frac{1}{2},1]$ and $U\in \Y$, we have
\begin{equation*}
I_{\lambda}(U)\geq \frac{1}{2} \left[\int_{\R^{N}} y^{1-2s} |\nabla U|^{2} \, dx dy + \int_{\R^{N}} u^{2} \, dx\right] - \int_{\R^{N}} F(u) \, dx - |h|_{2} |u|_{2} =: J(U). 
\end{equation*}
Then, we can proceed as in the proof of Lemma \ref{lem2.2}, to deduce that
$$
\inf_{\gamma \in \Gamma} \max_{t\in [0,1]} J(\gamma(t))>0,
$$
provided that $|h|_{2}<m_{1}$, with $m_{1}$ given by Theorem \ref{thm3.1}.
Then, for any $\lambda\in \left[\frac{1}{2}, 1\right]$, we can see that
\begin{equation*}
c_{\lambda}= \inf_{\gamma \in \Gamma} \max_{t\in [0,1]} I_{\lambda}(\gamma(t)) \geq \inf_{\gamma \in \Gamma} \max_{t\in [0,1]} J(\gamma(t)) >\max \{I_{\lambda}(0), I_{\lambda}(\bar{V})\}.
\end{equation*}
This ends the proof of the lemma. 

\end{proof}

\noindent
By using Lemma \ref{lem3.1} and Theorem \ref{thm3.2}, we can infer that there exists $\{\lambda_{j}\}\subset [\frac{1}{2}, 1]$ such that
\begin{compactenum}[$(i)$]
\item $\lambda_{j}\rightarrow 1$ as $j\rightarrow +\infty$;
\item $I_{\lambda_{j}}$ has a bounded (PS) sequence $\{U_{n}^{j}\}$ at the level $c_{\lambda_{j}}$. 
\end{compactenum}
In view of Theorem \ref{Lions}, we deduce that for each $j\in \N$, there exists $U_{j} \in \Y$ such that $U_{n}^{j}\rightarrow U_{n}$ strongly in $\Y$ and $U_{j}$ is a positive solution of 
\begin{equation*}
\left\{
\begin{array}{ll}
\dive(y^{1-2s} \nabla U_{j})=0 & \mbox{ in } \R^{N+1}_{+} \\
\frac{\partial U_{j}}{\partial \nu^{1-2s}}=-u_{j}+ \lambda_{j}[f(u_{j})+h(x)]  & \mbox{ on } \R^{N} \\
\end{array}.
\right.
\end{equation*}
Proceeding as in \cite{A1, CW}, it is easy to see that each $U_{j}$ satisfies the following Pohozaev identity: 
\begin{equation}\label{3.2}
\frac{N-2s}{2} \iint_{\R^{N+1}_{+}} y^{1-2s}|\nabla U_{j}|^{2} \, dx dy+ \frac{N}{2} \int_{\R^{N}} u_{j}^{2} \, dx= N \lambda_{j} \int_{\R^{N}} (F(u_{j}) + hu_{j}) \, dx+ \lambda_{j} \int_{\R^{N}} \nabla h(x) \cdot x u_{j} \, dx. 
\end{equation}

\noindent 
In the next lemma, we use the condition $(H)$ to prove the boundedness of the sequence $\{U_{j}\}$.
\begin{lem}\label{lem3.2}
Assume that $(f1)$-$(f4)$ with $l=+\infty$ hold, and $h$ satisfies (1.5) and $|h|_{2}<m_{1}$ for $m_{1}$ given in Theorem \ref{thm3.1}. Then $\{U_{j}\}\subset \Y$ is bounded. 
\end{lem}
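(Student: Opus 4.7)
The strategy is to bound separately the Dirichlet energy $A_j:=\iint_{\R^{N+1}_+} y^{1-2s}|\nabla U_j|^2\,dx\,dy$ and the trace part $B_j:=\int_{\R^N} u_j^2\,dx$ of $\|U_j\|_\Y^2$, by exploiting three identities satisfied by $U_j$: the energy level $I_{\lambda_j}(U_j)=c_{\lambda_j}$, the Nehari identity $\langle I'_{\lambda_j}(U_j),U_j\rangle=0$, and the Pohozaev identity \eqref{3.2}. A preliminary remark is that $c_{\lambda_j}$ is uniformly bounded: the family $\{I_\lambda\}_{\lambda\in[1/2,1]}$ admits a common admissible path, e.g.\ $t\mapsto t\bar V$ with $\bar V$ from Lemma \ref{lem3.1}, and $\max_{t\in[0,1]} I_\lambda(t\bar V)$ depends continuously on $\lambda\in[1/2,1]$.

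To bound $A_j$, I would solve
\begin{equation*}
\lambda_j\int_{\R^N} F(u_j)\,dx+\lambda_j\int_{\R^N} hu_j\,dx \;=\; \tfrac{1}{2}(A_j+B_j)-c_{\lambda_j}
\end{equation*}
from the energy identity and substitute into \eqref{3.2}. The $B_j$ and $\int F(u_j)$ contributions cancel out, leaving
\begin{equation*}
s\,A_j \;=\; N c_{\lambda_j} \;-\; \lambda_j\int_{\R^N}(x\cdot\nabla h)\,u_j\,dx.
\end{equation*}
By assumption $(H)$ we have $x\cdot\nabla h\ge 0$ pointwise, and each $U_j$ is nonnegative (in fact positive by the strong maximum principle), so the last integral is nonnegative and $A_j\le (N/s)c_{\lambda_j}$ is therefore bounded.

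To bound $B_j$, I would use the Nehari identity $A_j+B_j=\lambda_j\int f(u_j)u_j\,dx+\lambda_j\int hu_j\,dx$ together with the elementary estimate $f(t)t\le \e t^2+C_\e|t|^{p+1}$ coming from $(f2)$--$(f3)$. The trace inequality \eqref{Seok} and the Sobolev embedding \eqref{Mia} control $|u_j|_{2^*_s}$ by $A_j^{1/2}$, so $|u_j|_{2^*_s}$ is already bounded by the previous step. Interpolating
\begin{equation*}
|u_j|_{p+1}^{p+1}\le |u_j|_2^{\theta(p+1)}\,|u_j|_{2^*_s}^{(1-\theta)(p+1)},\qquad \tfrac{1}{p+1}=\tfrac{\theta}{2}+\tfrac{1-\theta}{2^*_s},
\end{equation*}
and checking that $\theta(p+1)<2$ (equivalent to $p>1$, which holds by $(f3)$), together with the bound $\int hu_j\,dx\le|h|_2\,B_j^{1/2}$, yields an inequality of the form
\begin{equation*}
(1-C\e)\,B_j \;\le\; C\,B_j^{\alpha}+C\,|h|_2\,B_j^{1/2}
\end{equation*}
with $\alpha<1$. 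Choosing $\e$ small, this forces $B_j$ to be bounded, and combined with the bound on $A_j$ gives $\|U_j\|_\Y$ bounded.

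The main technical obstacle is the Pohozaev step: without the sign condition in $(H)$, the remainder $\int (x\cdot\nabla h)u_j\,dx$ is a priori not controlled by $c_{\lambda_j}$ and could destroy the bound on the Dirichlet energy. Once $A_j$ is under control, bounding $B_j$ is a rather routine consequence of the subcritical growth $(f3)$ and interpolation.
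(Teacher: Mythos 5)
Your proof is correct and takes essentially the same route as the paper: the key step is to combine the Pohozaev identity \eqref{3.2} with the sign condition $(H)$ to obtain $s\,A_j\le N c_{\lambda_j}$, which bounds the Dirichlet energy, and then to exploit the resulting bound on $|u_j|_{2^*_s}$ (via \eqref{Mia}) to control the $L^2$-trace. The only minor cosmetic deviations are that the paper obtains uniform boundedness of $c_{\lambda_j}$ from the left-continuity of $\lambda\mapsto c_\lambda$ guaranteed by Theorem \ref{thm3.2} rather than from a common test path, and bounds $B_j$ directly from the energy inequality $I_{\lambda_j}(U_j)\le K$ using a growth estimate $F(t)\le\tfrac14 t^2+C\,t^{2^*_s}$ (so the already-bounded $|u_j|_{2^*_s}$ enters without any interpolation), whereas you use the Nehari identity and a $2$--$2^*_s$ interpolation of $|u_j|_{p+1}$; both implementations are valid.
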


\begin{proof}
By using Theorem \ref{thm3.2}, we know that the map $\lambda\rightarrow c_{\lambda}$ is continuous from the left.
Then, by Lemma \ref{lem3.1} $(ii)$, we can deduce that $I_{\lambda_{j}}(U_{j})= c_{\lambda_{j}}\rightarrow c_{1}>0$ as $\lambda_{j}\rightarrow 1$. 

Hence, we can find a constant $K>0$ such that $I_{\lambda_{j}}(U_{j})\leq K$ for all $j\in \N$. 
By combining this, \eqref{3.2}, $u_{j}>0$ and $(H)$,  we can see that
\begin{equation*}
\iint_{\R^{N+1}_{+}} y^{1-2s} |\nabla U_{j}|^{2} \, dx dy \leq \frac{KN}{s} - \frac{\lambda_{j}}{s}\int_{\R^{N}} \nabla h(x) \cdot x u_{j}\, dx\leq \frac{KN}{s},
\end{equation*}
which together with the Sobolev inequality (\ref{Mia}), implies that
\begin{equation}\label{AH10}
|u_{j}|_{2^{*}_{s}}\leq S_{*} \left(\iint_{\R^{N+1}_{+}} y^{1-2s} |\nabla U_{j}|^{2} \, dx dy\right)^{\frac{1}{2}}\leq C.
\end{equation}
Now, from $I_{\lambda_{j}}(U_{j})\leq K$ for all $j\in \N$, it follows that
\begin{equation}\label{3.8}
\frac{1}{2}\left[\iint_{\R^{N+1}_{+}} y^{1-2s} |\nabla U_{j}|^{2} \, dx dy + \int_{\R^{N}} u_{j}^{2}\, dx\right] - \lambda_{j} \int_{\R^{N}} (F(u_{j})+ h(x) u_{j})\, dx \leq K. 
\end{equation}
On the other hand, by using $(f2)$, $(f3)$, we can see that there exists a constant $C>0$ such that
\begin{equation*}
\int_{\R^{N}} F(u_{j}) \, dx\leq \frac{1}{4} \int_{\R^{N}} u_{j}^{2} \, dx+ C\int_{\R^{N}} u_{j}^{2^{*}} \, dx, 
\end{equation*}
where $2^{*}_{s}= \frac{2N}{N-2s}$. 

Substituting this inequality into (\ref{3.8}), and by using (\ref{AH10}), (\ref{Mia}), we deduce that
\begin{align*}
\frac{1}{2} \int_{\R^{N}} u_{j}^{2} \, dx &\leq \lambda_{j} \int_{\R^{N}} (F(u_{j})+ h(x) u_{j})\, dx+K \\
&\leq \frac{1}{4}|u_{j}|_{2}^{2}+C|u_{j}|_{2^{*}_{s}}^{2^{*}_{s}}+|h|_{2}|u_{j}|_{2}+K \\
&\leq  \frac{1}{4}|u_{j}|_{2}^{2}+\bar{C}+|h|_{2}|u_{j}|_{2}+K.
\end{align*}
Then
\begin{equation*}
\frac{1}{4} \int_{\R^{N}} u_{j}^{2} \, dx \leq \tilde{C} + |h|_{2} |u_{j}|_{2}, 
\end{equation*}
that is
\begin{equation}\label{l2bound}
|u_{j}|_{2}\leq C \mbox{ for all } j\in \N,
\end{equation}
for some positive constant $C$ independent of $j$.
Putting together (\ref{AH10}) and (\ref{l2bound}), we can conclude the proof of this lemma. 

\end{proof}

\begin{lem}\label{lem3.3}
Under the assumptions of Lemma $\ref{lem3.2}$, the above sequence $\{U_{j}\}$ is also a (PS) sequence of $I$. 
\end{lem}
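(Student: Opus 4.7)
The plan is to compare $I$ with $I_{\lambda_j}$ directly, exploiting the fact that $I_{\lambda_j}(U) = I(U) - (\lambda_j - 1)\int_{\R^N}(F(u)+h(x)u)\,dx$ and that $\{U_j\}$ is bounded in $\Y$ by Lemma \ref{lem3.2}. Concretely, I will first establish that $I(U_j)\to c_1$, and then that $I'(U_j)\to 0$ in the dual of $\Y$.

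For the energy, write
\begin{equation*}
I(U_j)=I_{\lambda_j}(U_j)+(\lambda_j-1)\int_{\R^N}\bigl(F(u_j)+h(x)u_j\bigr)\,dx.
\end{equation*}
By construction $I_{\lambda_j}(U_j)=c_{\lambda_j}$, and since the map $\lambda\mapsto c_\lambda$ is left-continuous on $[1/2,1]$ (Theorem \ref{thm3.2}) and $\lambda_j\uparrow 1$, we get $c_{\lambda_j}\to c_1$. For the remainder I use the growth bound \eqref{2.7}, the boundedness of $\{u_j\}$ in $L^2(\R^N)\cap L^{p+1}(\R^N)$ (which follows from Lemma \ref{lem3.2} together with Theorem \ref{Sembedding}), and $h\in L^2(\R^N)$, to see that
\begin{equation*}
\left|\int_{\R^N}\bigl(F(u_j)+h(x)u_j\bigr)\,dx\right|\leq \tfrac{\varepsilon}{2}|u_j|_2^2+\tfrac{C_\varepsilon}{p+1}|u_j|_{p+1}^{p+1}+|h|_2|u_j|_2\leq C,
\end{equation*}
uniformly in $j$. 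Multiplying by $\lambda_j-1\to 0$, the remainder vanishes and hence $I(U_j)\to c_1$.

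For the derivative, given any $\Phi\in\Y$ with $\|\Phi\|\leq 1$ and trace $\phi$, write
\begin{equation*}
\langle I'(U_j),\Phi\rangle=\langle I'_{\lambda_j}(U_j),\Phi\rangle+(\lambda_j-1)\int_{\R^N}\bigl(f(u_j)+h(x)\bigr)\phi\,dx.
\end{equation*}
The first term vanishes because $U_j$ is a critical point of $I_{\lambda_j}$. For the second, the pointwise estimate $|f(t)|\leq \varepsilon|t|+C_\varepsilon|t|^p$ (from $(f1)$-$(f3)$) combined with H\"older's inequality, the continuous embeddings $\Y\hookrightarrow L^2(\R^N)\cap L^{p+1}(\R^N)$ for the trace, and the bound on $\{u_j\}$, yields
\begin{equation*}
\left|\int_{\R^N}\bigl(f(u_j)+h(x)\bigr)\phi\,dx\right|\leq \bigl(\varepsilon|u_j|_2+C_\varepsilon|u_j|_{p+1}^p+|h|_2\bigr)\,C\|\Phi\|\leq C',
\end{equation*}
uniformly in $j$ and in $\Phi$ with $\|\Phi\|\leq 1$. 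Taking the supremum over such $\Phi$ and multiplying by $|\lambda_j-1|\to 0$ shows $\|I'(U_j)\|_*\to 0$, completing the proof.

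The main obstacle here is purely bookkeeping: every nontrivial estimate has already been done in establishing the boundedness of $\{U_j\}$ and in Lemmas \ref{lem2.2}--\ref{lem3.2}. The only subtle point is invoking the \emph{left}-continuity of $\lambda\mapsto c_\lambda$, which is guaranteed by Theorem \ref{thm3.2} and is consistent with the fact that $\lambda_j\in[1/2,1]$ accumulates at $1$ from below.
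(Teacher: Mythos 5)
Your proof is correct and follows the same route as the paper: the identical decompositions $I(U_j)=I_{\lambda_j}(U_j)+(\lambda_j-1)\int(F(u_j)+hu_j)$ and $\langle I'(U_j),\Phi\rangle=\langle I'_{\lambda_j}(U_j),\Phi\rangle+(\lambda_j-1)\int(f(u_j)+h)\phi$, combined with $c_{\lambda_j}\to c_1$ from the left-continuity in Theorem \ref{thm3.2} and the vanishing of the $(\lambda_j-1)$-terms via the uniform bounds from Lemma \ref{lem3.2}. You merely spell out the uniform estimates that the paper leaves implicit when it cites Lemma \ref{lem3.2}; no new idea is introduced, and nothing is missing.
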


\begin{proof}
From the definitions of $I$ and $I_{\lambda_{j}}$ we can deduce that
\begin{equation}\label{3.9}
I(U_{j})= I_{\lambda_{j}}(U_{j}) + (\lambda_{j}-1) \int_{\R^{N}} (F(u_{j}) + h(x) u_{j})\, dx. 
\end{equation}
By using Theorem \ref{thm3.2}, we  obtain 
$$
I_{\lambda_{j}}(U_{j})= c_{\lambda_{j}}\rightarrow c_{1}>0 \mbox{ as } \lambda_{j}\rightarrow 1.
$$ 
Hence, by applying Lemma \ref{lem3.2} and \eqref{3.9}, we get $I(U_{j})\rightarrow c_{1}>0$. Being $I'_{\lambda_{j}}(U_{j})=0$, we can infer that, for any $\varPsi \in C^{\infty}_{0}(\R^{N+1}_{+})$, 
\begin{equation*}
\langle I'(U_{j}), \varPsi \rangle = \langle I'_{\lambda_{j}}(U_{j}), \varPsi \rangle + (\lambda_{j}-1)\int_{\R^{N}} (f(u_{j}) +h(x))\psi \, dx \rightarrow 0, 
\end{equation*}
that is $I'(U_{j})\rightarrow 0$  as $j\rightarrow \infty$ in the dual space of $\Y$.

\end{proof}

\noindent

Finally, we give the proof of the main result of this section: 
\begin{proof}[Proof of Theorem 1.2]
Taking into account Theorem \ref{thm3.1}, we know that \eqref{3.1} admits a positive solution $\tilde{U}_{0}\in \Y$ such that $I(\tilde{U}_{0})<0$. On the other hand, by Lemma \ref{lem3.3} and Theorem \ref{Lions}, we know that problem \eqref{3.1} has a second positive solution $\tilde{U}_{1}\in \Y$ with $I(\tilde{U}_{1})= c_{1}>0$. \\
As a consequence $\tilde{U}_{0}\not \equiv \tilde{U}_{1}$, and this ends the proof.

\end{proof}

\section{Some examples}

\noindent
In this last section we provide some examples of functions $f$, $k$ and $h$ for which our main results are applicable.
\begin{ex}
Let $R_{0}>0$ and let us define
\begin{equation*}
k(x)=
\left\{
\begin{array}{ll}
\frac{1}{1+|x|} &\mbox{ if } |x|<R_{0}\\
\frac{1}{1+R_{0}} &\mbox{ if } |x|\geq R_{0} 
\end{array}
\right.
\mbox{ and }
f(t)=
\left\{
\begin{array}{ll}
\frac{R_{0}t^{2}}{1+t} &\mbox{ if } t>0\\
0 &\mbox{ if } t\leq 0. 
\end{array}
\right.
\end{equation*}
It is clear that $|k|_{\infty}=1$, and $f$ satisfies $(f1)$-$(f3)$ and $(f4)$ with $l=R_{0}$. Moreover, we note that (K) holds because of
\begin{equation*}
\sup \left\{\frac{f(t)}{t}: t>0\right\}=R_{0}<R_{0}+1=\inf\left\{\frac{1}{k(x)}: |x|\geq R_{0}\right\}.\\
\end{equation*}
Now, to verify that $l>\mu^{*}$, we have to choose a special $R_{0}>0$. For $R>0$, we take $\phi \in C^{\infty}_{0}(\R^{N})$ such that $\phi(x)=1$ if $|x|\leq R$, $\phi(x)=0$ if $|x|\geq 2R$ and $|\nabla \phi(x)|\leq \frac{C}{R}$ for all $x\in \R^{N}$. 

Since $\phi\in H^{1}(\R^{N})\subset H^{s}(\R^{N})$, we can see that 
$$
\|\phi\|_{H^{s}(\R^{N})}\leq C\|\phi\|_{H^{1}(\R^{N})}.
$$ 
On the other hand, for any $R_{0}>2R$, we have
\begin{equation*}
\frac{\int_{\R^{N}} \phi^{2} dx}{\int_{\R^{N}} k(x)\phi^{2} dx} \leq \frac{\int_{\R^{N}} \phi^{2} dx}{\frac{1}{1+2R}\int_{\R^{N}} \phi^{2} dx}= 1+2R
\end{equation*}
and 
\begin{equation*}
\frac{\int_{\R^{N}} |\nabla \phi|^{2} dx}{\int_{\R^{N}} k(x)\phi^{2} dx}\leq \frac{\frac{C^{2}}{R^{2}} |B_{2R}|}{\int_{B_{R}} k(x)\, dx} \leq \frac{\frac{C^{2}}{R^{2}} |B_{2R}|}{\frac{1}{1+R}|B_{R}|}= C_{1}\frac{(1+R)}{R^{2}}.
\end{equation*}
Therefore 
\begin{equation*}
\frac{\|\phi\|^{2}_{H^{s}(\R^{N})}}{\int_{\R^{N}} k(x)\phi^{2} dx} \leq \frac{C\|\phi\|^{2}_{H^{1}(\R^{N})}}{\int_{\R^{N}} k(x)\phi^{2} dx}\leq C_{2}\frac{(1+R)}{R^{2}}+C_{3}(1+2R)
\end{equation*}
where $C_{2}, C_{3}>0$ are constants independent of $R$. \\
Choosing $R>0$ such that $C_{2}\frac{(1+R)}{R^{2}}\leq C_{3}$, we can infer that $\mu^{*}\leq 2C_{3}(R+1)$. Then, taking $R_{0}= 2C_{3}(R+1)+2R$, we have
\begin{equation*}
\lim_{t\rightarrow +\infty} \frac{f(t)}{t}= l= R_{0}>\mu^{*}. 
\end{equation*} 
Now, fix $\e\in (0, 1)$, and let $h\in L^{2}(\R^{N})$ such that 
\begin{equation*}
|h|_{2}<m:=\max_{t\geq 0}\left[\left(\frac{1}{2}-\frac{\varepsilon}{2}\right)t-\frac{C_{\varepsilon}}{p+1}S_{*}^{p+1}t^{p}\right].
\end{equation*}
Then, all assumptions of Theorem \ref{thm1} are satisfied, and we can find at least two positive solutions to \eqref{P}.
\end{ex}

\begin{ex}
Fix $\e\in (0, 1)$, and let us consider the following functions
\begin{equation*} 
h(x)=
\left\{
\begin{array}{ll}
0 &\mbox{ if } |x|<\sqrt{3} \vee |x|>2\\
C(|x|^{2}-2)^{2}(|x|^{2}-3)^{2}(|x|^{2}-4)^{2} &\mbox{ if } \sqrt{3}\leq |x|\leq 2 
\end{array}
\right.
\end{equation*}
and
\begin{equation*} 
f(t)=
\left\{
\begin{array}{ll}
t \log(1+t) &\mbox{ if } t>0\\
0 &\mbox{ if } t\leq 0, 
\end{array}
\right.
\end{equation*}
where $C>0$ is a constant such that 
$$
|h|_{2}<m:=\max_{t\geq 0}\left[\left(\frac{1}{2}-\frac{\varepsilon}{2}\right)t-\frac{C_{\varepsilon}}{p+1}S_{*}^{p+1}t^{p}\right].
$$
It is clear that $f$ satisfies $(f1)$-$(f3)$ and $(f4)$ with $l=\infty$, and $h\in C^{1}(\R^{N})\cap L^{2}(\R^{N})$. \\
Moreover, for any $\sqrt{3}<|x|<2$, we have 
$$
x\cdot \nabla h= 4C\left[|x|^{2} (|x|^{2}-2)(|x|^{2}-3)(|x|^{2}-4)(3|x|^{4}-22|x|^{2}+26)\right]\geq 0,
$$  
so $x\cdot \nabla h\geq 0$ on $\R^{N}$. In particular, $x\cdot \nabla h\in L^{q}(\R^{N})$ for any $q\in [1, \infty]$.
Then, we can apply Theorem \ref{thm2} to deduce that the problem \eqref{P} admits at least two positive solutions.
\end{ex}

\end{document}